\title{Convergence of the Stochastic Six-Vertex Model to the ASEP} 
\author{Amol Aggarwal} 
\begin{document}

\newtheorem{thm}{Theorem}
\newtheorem{prop}[thm]{Proposition}
\newtheorem{lem}[thm]{Lemma}
\newtheorem{cor}[thm]{Corollary}
\newtheorem{conj}[thm]{Conjecture}
\newtheorem{que}[thm]{Question}
\theoremstyle{remark}
\newtheorem{rem}[thm]{Remark}
\theoremstyle{definition}
\newtheorem{definition}[thm]{Definition}
\newtheorem{exa}[thm]{Example}

\begin{abstract}

In this note we establish the convergence of the stochastic six-vertex model to the one-dimensional asymmetric simple exclusion process, under a certain limit regime recently predicted by Borodin-Corwin-Gorin. This convergence holds for arbitrary initial data. 

\end{abstract}

\maketitle

\section{Introduction}

\label{Introduction}

Over the past three decades, significant effort \cite{OCVDA, ENDDS, CFA, AASIC, ASBIC} has been devoted towards the understanding of \emph{current fluctuations} of the \emph{asymmetric simple exclusion process} (ASEP); see Section \ref{Exclusion} for definitions. When the ASEP is run under certain types of initial data for some large time $T$, these fluctuations have been proven (or are generally believed) to be of order $T^{1 / 3}$, a characteristic property of models in the \emph{Kardar-Parisi-Zhang (KPZ) universality class}. 

Central to many recent developments in this direction has been the \emph{integrability} of the ASEP, which was established by Tracy and Widom \cite{FDRA, IASEP} through a coordinate Bethe ansatz. This led to exact identities for the ASEP that were amenable to large-time asymptotics and entirely characterized the distribution of the current.

A troublesome limitation to Tracy-Widom's method (or in general to the coordinate Bethe ansatz) is that it requires some degree of guesswork to match the required identity with the initial data. This is reflected by the fact that this method has only produced asymptotic current fluctuation results for two classes of initial data (step \cite{AASIC} and step Bernoulli \cite{ASBIC}). 

There are a number of other models that are also solvable through the coordinate Bethe ansatz, but whose integrability is much better understood through alternative methods. One example is the \emph{six-vertex} (or \emph{ice}) model studied by Lieb \cite{RESI} and Baxter \cite{ESMSM}. However, in this note we consider a stochastic version of this model, due to Gwa and Spohn \cite{SVMRSASH}, called the \emph{stochastic six-vertex model}; see Section \ref{StochasticVertex} for definitions. 

Recently, Borodin, Corwin, and Gorin \cite{SSVM} implemented a coordinate Bethe ansatz (similar to Tracy-Widom's) to establish KPZ statistics for the current fluctuations of the stochastic six-vertex model. However, more recently, Borodin and Petrov \cite{HSVMRSF, IPSVMSF} found a new source of integrability for this model based on the \emph{Yang-Baxter equations}, which are a family of commutation relations between certain row-to-row and column-to-column \emph{transfer matrices} of the six-vertex model; this led to a new proof of the identities presented in \cite{SSVM}. 

Borodin and Petrov observed \cite{HSVMRSF} that these row-to-row and row and column-to-column commutation relations also allowed for a considerable degree of flexibility to the stochastic six-vertex model. In particular, one can introduce what are known as \emph{inhomogeneity parameters} and \emph{deformed spin parameters} \cite{HSVMRSF} to columns of this model that introduce different dynamics or new boundary data to the model while retaining its integrability. 

One might hope that similar parameters can be introduced for the ASEP; unfortunately, it is not known how to do this. 

However, Borodin, Corwin, and Gorin predicted in \cite{SSVM} that one can map the stochastic six-vertex model to the ASEP under a certain ``diagonal offset''\footnote{This diagonal offset is what seems to preclude the introduction of inhomogeneity and deformed spin parameters in the ASEP.} and limit degeneration; see Section \ref{ConvergenceProcesses} for a more precise statement. In particular, if true, their prediction would allow one to still use the additional parameters underlying the stochastic six-vertex model to analyze the ASEP, even though these parameters do not appear to exist directly for the ASEP.

The purpose of this note is to verify their prediction. In two separate papers, we will combine the results of this note with an analysis of a (deformed and inhomogeneous) stochastic six-vertex model to establish KPZ statistics for the ASEP under \emph{generalized step Bernoulli} \cite{PTAEPSSVMHSID} and \emph{stationary} \cite{CFSAEPSSVMCL} initial data.

This note is organized as follows. In Section \ref{ProcessModel}, we define the models and state our results (see Theorem \ref{vertexmodelprocess} and Corollary \ref{currentmodelprocess}). In Section \ref{Spin12}, we give an alternative description for the dynamics of the (offset) stochastic six-vertex model that provides a heuristic for our results. 

The remainder of this paper will be devoted to the proofs of these results; in particular, the proof will consist of three steps. 

First, we recall a definition of the ASEP in terms of \emph{time graphs} (due to Harris \cite{ASMPGM} almost 40 years ago); in Section \ref{TimeGraph}, we define these time graphs for the ASEP and also introduce certain deformations of them that give rise to a resampling of the stochastic six-vertex model. 

Second, we ``restrict'' the ASEP and stochastic six-vertex model to intervals of finite length. We explain this restriction more precisely in Section \ref{BoundedModelProcess} and justify why we can perform it	 in Section \ref{BoundedConverge}. 

Third, in Section \ref{ModelProcessConverge}, we establish the convergence of the stochastic six-vertex model to the ASEP on finite intervals by showing that the time graph (from the first step) of the stochastic six-vertex model converges to the time graph of the ASEP on compact domains.

\subsection*{Acknowledgements}

The author heartily thanks Alexei Borodin for valuable conversations and very helpful comments on a preliminary version of this note. This work was funded by the Eric Cooper and Naomi Siegel Graduate Student Fellowship I and the NSF Graduate Research Fellowship under grant number DGE1144152.

\section{Results} 

\label{ProcessModel}

In this section we state our results, given by Theorem \ref{vertexmodelprocess} and Corollary \ref{currentmodelprocess} in Section \ref{ConvergenceProcesses}. However, we first require the definition of the ASEP (given in Section \ref{Exclusion}); the stochastic six-vertex model (given in Section \ref{StochasticVertex}); its initial data (given in Section \ref{Initial}); and the associated particle system (given in Section \ref{InteractingParticles}).

\subsection{The Asymmetric Simple Exclusion Process}

\label{Exclusion}

Introduced to the mathematics community by Spitzer \cite{IMP} in 1970 (and also appearing two years earlier in the biology work of Macdonald, Gibbs, and Pipkin \cite{KBNAT}), the \emph{asymmetric simple exclusion process} (ASEP) is a continuous time Markov process that can be described as follows. 

Particles are initially (at time $0$) placed on $\mathbb{Z}$ in such a way that at most one particle occupies any site. Associated with each particle are two exponential clocks, one of rate $L$ and one of rate $R$; we assume that $R > L \ge 0$ and that all clocks are mutually independent. When some particle's left clock rings, the particle attempts to jump one space to the left; similarly, when its right clock rings, it attempts to jump one space to the right. If the destination of the jump is unoccupied, the jump is performed; otherwise it is not. This is sometimes referred to as the \emph{exclusion restriction}. 

Associated with the ASEP is an observable called the \emph{current}. To define this quantity, we \emph{tag} the particles of the ASEP, meaning that we track their evolution over time by indexing them based on initial position. Specifically, let the initial positions of the particles be $\cdots < X_{-1} (0) < X_0 (0) < X_1 (0) < \cdots$, where $X_{-1} (0) \le 0 < X_0 (0)$. The particle initially at site $X_k (0)$ will be referred to as \emph{particle $k$}. For each $k \in \mathbb{Z}$ and $t > 0$, let $X_k (t)$ denote the position of particle $k$ at time $t$. Since all jumps are nearest-neighbor, we have that $\cdots < X_{-1} (t) < X_0 (t) < X_1 (t) < \cdots $ for all $t \ge 0$.

Now, consider the ASEP after run for some time $t$. For any $x \in \mathbb{R}$, define the \emph{current through $[0, x]$} to be the almost surely finite sum 
\begin{flalign*}
J_t (x) = \displaystyle\sum_{i = - \infty}^{\infty} \big( \textbf{1}_{X_i (0) \le 0} \textbf{1}_{X_i (t) > x} - \textbf{1}_{X_i (0) > 0} \textbf{1}_{X_i (t) \le x} \big). 
\end{flalign*}

Observe that $J_t (x)$ has the following combinatorial interpretation. Color all particles initially to the right of $0$ red, and color all particles initially at or to the left of $0$ blue. Then, $J_t (x)$ denotes the number of red particles at or to the left of $x$ at time $T$ subtracted from the number of blue particles to the right of $x$ at time $t$.

\begin{figure}[t]

\begin{center}

\begin{tikzpicture}[
      >=stealth,
			]

			\draw[->, black	] (0, 0) -- (0, 4.5);
			\draw[->, black] (0, 0) -- (4.5, 0);
			\draw[->,black, thick] (0, .5) -- (.45, .5);
			\draw[->,black, thick] (0, 1) -- (.45, 1);
			\draw[->,black, thick] (0, 1.5) -- (.45, 1.5);
			\draw[->,black, thick] (0, 2) -- (.45, 2);
			\draw[->,black, thick] (0, 2.5) -- (.45, 2.5);
			\draw[->,black, thick] (0, 3) -- (.45, 3);
			\draw[->,black, thick] (0, 3.5) -- (.45, 3.5);

			\draw[->,black, thick] (.55, .5) -- (.95, .5);
			\draw[->,black, thick] (.55, 1) -- (.95, 1);
			\draw[->,black, thick] (.55, 1.5) -- (.95, 1.5);
			\draw[->,black, thick] (.55, 2.5) -- (.95, 2.5);
			\draw[->,black, thick] (.55, 3) -- (.95, 3);
			\draw[->,black, thick] (.55, 3.5) -- (.95, 3.5);
			\draw[->,black, thick] (.5, 2.05) -- (.5, 2.45);
			\draw[->,black, thick] (.5, 2.55) -- (.5, 2.95);
			\draw[->,black, thick] (.5, 3.05) -- (.5, 3.45);
			\draw[->,black, thick] (.5, 3.55) -- (.5, 3.95);
			
			\draw[->,black, thick] (1.05, .5) -- (1.45, .5);
			\draw[->,black, thick] (1.05, 1) -- (1.45, 1);
			\draw[->,black, thick] (1.05, 2) -- (1.45, 2);
			\draw[->,black, thick] (1.05, 2.5) -- (1.45, 2.5);
			\draw[->,black, thick] (1.05, 3.5) -- (1.45, 3.5);
			\draw[->,black, thick] (1, 1.55) -- (1, 1.95);
			\draw[->,black, thick] (1, 3.05) -- (1, 3.45);
			\draw[->,black, thick] (1, 3.55) -- (1, 3.95);
			
			\draw[->,black, thick] (1.5, .55) -- (1.5, .95);
			\draw[->,black, thick] (1.5, 1.05) -- (1.5, 1.45);
			\draw[->,black, thick] (1.5, 1.55) -- (1.5, 1.95);
			\draw[->,black, thick] (1.5, 2.05) -- (1.5, 2.45);
			\draw[->,black, thick] (1.5, 2.55) -- (1.5, 2.95);
			\draw[->,black, thick] (1.5, 3.55) -- (1.5, 3.95);
			\draw[->,black, thick] (1.55, 1) -- (1.95, 1);
			\draw[->,black, thick] (1.55, 2) -- (1.95, 2);
			\draw[->,black, thick] (1.55, 2.5) -- (1.95, 2.5);
			\draw[->,black, thick] (1.55, 3) -- (1.95, 3);

			\draw[->,black, thick] (2, 1.05) -- (2, 1.45);
			\draw[->,black, thick] (2, 3.05) -- (2, 3.45);
			\draw[->,black, thick] (2, 3.55) -- (2, 3.95);
			\draw[->,black, thick] (2.05, 1.5) -- (2.45, 1.5);
			\draw[->,black, thick] (2.05, 2) -- (2.45, 2);
			\draw[->,black, thick] (2.05, 2.5) -- (2.45, 2.5);

			\draw[->,black, thick] (2.5, 1.55) -- (2.5, 1.95);
			\draw[->,black, thick] (2.5, 2.05) -- (2.5, 2.45);
			\draw[->,black, thick] (2.5, 2.55) -- (2.5, 2.95);
			\draw[->,black, thick] (2.5, 3.05) -- (2.5, 3.45);
			\draw[->,black, thick] (2.5, 3.55) -- (2.5, 3.95);
			\draw[->,black, thick] (2.55, 2) -- (2.95, 2);
			\draw[->,black, thick] (2.55, 2.5) -- (2.95, 2.5);

			\draw[->,black, thick] (3, 2.55) -- (3, 2.95);
			\draw[->,black, thick] (3.05, 2) -- (3.45, 2);
			\draw[->,black, thick] (3.05, 3) -- (3.45, 3);

			\draw[->,black, thick] (3.5, 2.05) -- (3.5, 2.45);
			\draw[->,black, thick] (3.5, 2.55) -- (3.5, 2.95);
			\draw[->,black, thick] (3.5, 3.05) -- (3.5, 3.45);
			\draw[->,black, thick] (3.5, 3.55) -- (3.5, 3.95);
			\draw[->,black, thick] (3.55, 3) -- (3.95, 3);
		
			\draw[->,black, thick] (4, 3.05) -- (4, 3.45);
			\draw[->,black, thick] (4, 3.55) -- (4, 3.95);

			\filldraw[fill=gray!50!white, draw=black] (.5, .5) circle [radius=.05];
			\filldraw[fill=gray!50!white, draw=black] (.5, 1) circle [radius=.05];
			\filldraw[fill=gray!50!white, draw=black] (.5, 1.5) circle [radius=.05];
			\filldraw[fill=gray!50!white, draw=black] (.5, 2) circle [radius=.05];
			\filldraw[fill=gray!50!white, draw=black] (.5, 2.5) circle [radius=.05];
			\filldraw[fill=gray!50!white, draw=black] (.5, 3) circle [radius=.05];
			\filldraw[fill=gray!50!white, draw=black] (.5, 3.5) circle [radius=.05];

			\filldraw[fill=gray!50!white, draw=black] (1, .5) circle [radius=.05];
			\filldraw[fill=gray!50!white, draw=black] (1, 1) circle [radius=.05];
			\filldraw[fill=gray!50!white, draw=black] (1, 1.5) circle [radius=.05];
			\filldraw[fill=gray!50!white, draw=black] (1, 2) circle [radius=.05];
			\filldraw[fill=gray!50!white, draw=black] (1, 2.5) circle [radius=.05];
			\filldraw[fill=gray!50!white, draw=black] (1, 3) circle [radius=.05];
			\filldraw[fill=gray!50!white, draw=black] (1, 3.5) circle [radius=.05];
			
			\filldraw[fill=gray!50!white, draw=black] (1.5, .5) circle [radius=.05];
			\filldraw[fill=gray!50!white, draw=black] (1.5, 1) circle [radius=.05];
			\filldraw[fill=gray!50!white, draw=black] (1.5, 1.5) circle [radius=.05];
			\filldraw[fill=gray!50!white, draw=black] (1.5, 2) circle [radius=.05];
			\filldraw[fill=gray!50!white, draw=black] (1.5, 2.5) circle [radius=.05];
			\filldraw[fill=gray!50!white, draw=black] (1.5, 3) circle [radius=.05];
			\filldraw[fill=gray!50!white, draw=black] (1.5, 3.5) circle [radius=.05];
			
			\filldraw[fill=gray!50!white, draw=black] (2, .5) circle [radius=.05];
			\filldraw[fill=gray!50!white, draw=black] (2, 1) circle [radius=.05];
			\filldraw[fill=gray!50!white, draw=black] (2, 1.5) circle [radius=.05];
			\filldraw[fill=gray!50!white, draw=black] (2, 2) circle [radius=.05];
			\filldraw[fill=gray!50!white, draw=black] (2, 2.5) circle [radius=.05];
			\filldraw[fill=gray!50!white, draw=black] (2, 3) circle [radius=.05];
			\filldraw[fill=gray!50!white, draw=black] (2, 3.5) circle [radius=.05];
			
			\filldraw[fill=gray!50!white, draw=black] (2.5, .5) circle [radius=.05];
			\filldraw[fill=gray!50!white, draw=black] (2.5, 1) circle [radius=.05];
			\filldraw[fill=gray!50!white, draw=black] (2.5, 1.5) circle [radius=.05];
			\filldraw[fill=gray!50!white, draw=black] (2.5, 2) circle [radius=.05];
			\filldraw[fill=gray!50!white, draw=black] (2.5, 2.5) circle [radius=.05];
			\filldraw[fill=gray!50!white, draw=black] (2.5, 3) circle [radius=.05];
			\filldraw[fill=gray!50!white, draw=black] (2.5, 3.5) circle [radius=.05];
			
			\filldraw[fill=gray!50!white, draw=black] (3, .5) circle [radius=.05];
			\filldraw[fill=gray!50!white, draw=black] (3, 1) circle [radius=.05];
			\filldraw[fill=gray!50!white, draw=black] (3, 1.5) circle [radius=.05];
			\filldraw[fill=gray!50!white, draw=black] (3, 2) circle [radius=.05];
			\filldraw[fill=gray!50!white, draw=black] (3, 2.5) circle [radius=.05];
			\filldraw[fill=gray!50!white, draw=black] (3, 3) circle [radius=.05];
			\filldraw[fill=gray!50!white, draw=black] (3, 3.5) circle [radius=.05];
			
			\filldraw[fill=gray!50!white, draw=black] (3.5, .5) circle [radius=.05];
			\filldraw[fill=gray!50!white, draw=black] (3.5, 1) circle [radius=.05];
			\filldraw[fill=gray!50!white, draw=black] (3.5, 1.5) circle [radius=.05];
			\filldraw[fill=gray!50!white, draw=black] (3.5, 2) circle [radius=.05];
			\filldraw[fill=gray!50!white, draw=black] (3.5, 2.5) circle [radius=.05];
			\filldraw[fill=gray!50!white, draw=black] (3.5, 3) circle [radius=.05];
			\filldraw[fill=gray!50!white, draw=black] (3.5, 3.5) circle [radius=.05];
			
			\filldraw[fill=gray!50!white, draw=black] (4, .5) circle [radius=.05];
			\filldraw[fill=gray!50!white, draw=black] (4, 1) circle [radius=.05];
			\filldraw[fill=gray!50!white, draw=black] (4, 1.5) circle [radius=.05];
			\filldraw[fill=gray!50!white, draw=black] (4, 2) circle [radius=.05];
			\filldraw[fill=gray!50!white, draw=black] (4, 2.5) circle [radius=.05];
			\filldraw[fill=gray!50!white, draw=black] (4, 3) circle [radius=.05];
			\filldraw[fill=gray!50!white, draw=black] (4, 3.5) circle [radius=.05];

\end{tikzpicture}

\end{center}	

\caption{\label{figurevertexwedge} A sample of the stochastic six-vertex model with step boundary data is depicted above.} 
\end{figure}
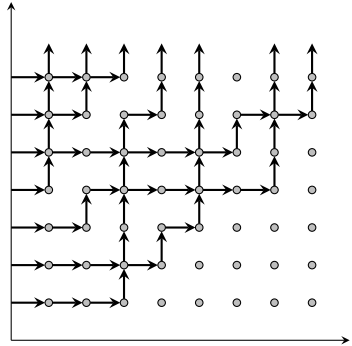

\subsection{The Stochastic Six-Vertex Model}

\label{StochasticVertex}

A \emph{six-vertex directed path ensemble} is a family up-right directed paths in the non-negative quadrant $\mathbb{Z}_{> 0}^2$, such that each path emanates from either the $x$-axis or $y$-axis, and such that no two paths share an edge (although they may share vertices); see Figure \ref{figurevertexwedge}. In particular, each vertex has six possible \emph{arrow configurations}, which are listed in the top row of Figure \ref{sixvertexfigure}. \emph{Initial data}, or \emph{boundary data}, for such an ensemble is prescribed by dictating which vertices on the positive $x$-axis and positive $y$-axis are entrance sites for a directed path. One example of initial data is \emph{step initial data}, in which paths only enter through the $y$-axis, and all vertices on the $y$-axis are entrance sites for paths; see Figure \ref{figurevertexwedge}. 

Now, fix parameters $\delta_1, \delta_2 \in [0, 1)$ and some initial data. The \emph{stochastic six-vertex model} $\mathcal{P} = \mathcal{P} (\delta_1, \delta_2)$ will be the infinite-volume limit of a family of probability measures $\mathcal{P}_n = \mathcal{P}_n (\delta_1, \delta_2)$ defined on the set of six-vertex directed path ensembles whose vertices are all contained in triangles of the form $\mathbb{T}_n = \{ (x, y) \in \mathbb{Z}_{\ge 0}^2: x + y \le n \}$. The first such probability measure $\mathcal{P}_1$ is supported by the empty ensemble.

For each positive integer $n$, we define $\mathcal{P}_{n + 1}$ from $\mathcal{P}_n$ through the following Markovian update rules. Use $\mathcal{P}_n$ to sample a directed path ensemble $\mathcal{E}_n$ on $\mathbb{T}_n$. This (and the initial data) gives arrow configurations (of the type shown in Figure \ref{sixvertexfigure}) to all vertices in the positive quadrant strictly below the diagonal $\mathbb{D}_n = \{ (x, y) \in \mathbb{Z}_{> 0}^2: x + y = n \}$. Each vertex on $\mathbb{D}_n$ is also given ``half'' of an arrow configuration, in the sense that it is given the directions of all entering paths but no direction of any exiting path. 

To extend $\mathcal{E}_n$ to a path ensemble on $\mathbb{T}_{n + 1}$, we must ``complete'' the configurations (specify the exiting paths) at all vertices $(x, y) \in \mathbb{D}_n$. Any half-configuration can be completed in at most two ways; selecting between these completions is done randomly, according to the probabilities given in the second row of Figure \ref{sixvertexfigure}. 

In this way, we obtain a random ensemble $\mathcal{E}_{n + 1}$ on $\mathbb{T}_{n + 1}$; the resulting probability measure on path ensembles with vertices in $\mathbb{T}_{n + 1}$ is denoted $\mathcal{P}_{n + 1}$. Now, set $\mathcal{P} = \lim_{n \rightarrow \infty} \mathcal{P}_n$. 

As in the ASEP, there exists an observable of interest for stochastic six-vertex model called the \emph{current}. To define it, we color a path red if it emanates from the $x$-axis, and we color a path blue if it emanates from the $y$-axis. Let $(X, Y) \in \mathbb{R}_{> 0}^2$. We define the \emph{current} (or \emph{height function}) $\mathfrak{H} (X, Y)$ of the stochastic six-vertex model at $(X, Y)$ to be the number of red paths that intersect the line $y = Y$ at or to the left of $(X, Y)$ subtracted from the number of blue paths that intersect the line $y = Y$ to the right of $(X, Y)$.

\begin{figure}[t]

\begin{center}

\begin{tikzpicture}[
      >=stealth,
			scale = .7
			]

			\draw[-, black] (-7.5, -.8) -- (7.5, -.8);
			\draw[-, black] (-7.5, 0) -- (7.5, 0);
			\draw[-, black] (-7.5, 2) -- (7.5, 2);
			\draw[-, black] (-7.5, -.8) -- (-7.5, 2);
			\draw[-, black] (7.5, -.8) -- (7.5, 2);
			\draw[-, black] (-5, -.8) -- (-5, 2);
			\draw[-, black] (5, -.8) -- (5, 2);
			\draw[-, black] (-2.5, -.8) -- (-2.5, 2);
			\draw[-, black] (2.5, -.8) -- (2.5, 2);
			\draw[-, black] (0, -.8) -- (0, 2);

			\draw[->, black,  thick] (-3.75, 1.1) -- (-3.75, 1.9);
			\draw[->, black,  thick] (-3.75, .1) -- (-3.75, .9);

			\draw[->, black,  thick] (-1.25, .1) -- (-1.25, .9);
			\draw[->, black,  thick] (-1.15, 1) -- (-.35, 1);

			\draw[->, black,  thick] (1.35, 1) -- (2.15, 1);
			\draw[->, black,  thick] (.35, 1) -- (1.15, 1);
			
			\draw[->, black,  thick] (3.75, 1.1) -- (3.75, 1.9);
			\draw[->, black,  thick] (2.85, 1) -- (3.65, 1);
			
			\draw[->, black,  thick] (6.25, 1.1) -- (6.25, 1.9);
			\draw[->, black,  thick] (6.25, .1) -- (6.25, .9);
			\draw[->, black,  thick] (6.35, 1) -- (7.15, 1);
			\draw[->, black,  thick] (5.35, 1) -- (6.15, 1);
				
			\filldraw[fill=gray!50!white, draw=black] (-6.25, 1) circle [radius=.1] node [black,below=21] {$1$};
			\filldraw[fill=gray!50!white, draw=black] (-3.75, 1) circle [radius=.1] node [black,below=21] {$\delta_1$};
			\filldraw[fill=gray!50!white, draw=black] (-1.25, 1) circle [radius=.1] node [black,below=21] {$1 - \delta_1$};
			\filldraw[fill=gray!50!white, draw=black] (1.25, 1) circle [radius=.1] node [black,below=21] {$\delta_2$};
			\filldraw[fill=gray!50!white, draw=black] (3.75, 1) circle [radius=.1] node [black,below=21] {$1 - \delta_2$};
			\filldraw[fill=gray!50!white, draw=black] (6.25, 1) circle [radius=.1] node [black,below=21] {$1$};

\end{tikzpicture}

\end{center}

\caption{\label{sixvertexfigure} The top row in the chart shows the six possible arrow configurations at vertices in the stochastic six-vertex model; the bottom row shows the corresponding probabilities. }
\end{figure}

\subsection{Initial Data}

\label{Initial}

In this section we describe initial data for the ASEP and stochastic six-vertex model. First we address the stochastic six-vertex model. 

\begin{definition}

\label{initialxy} 

Let $\varphi = \big( \varphi (1), \varphi (2), \ldots \big)$, with $\varphi (t) = \big( \varphi_t^{(x)}, \varphi_t^{(y)} \big) \in \{ 0, 1 \} \times \{ 0, 1 \} $, be a stochastic process. In the stochastic six-vertex model with \emph{initial data $\varphi$}, the site $(1, t)$ is an entrance site for a path if and only if $\varphi_t^{(y)} = 1$, and the site $(t, 1)$ is an entrance site for a path if and only if $\varphi_t^{(x)} = 1$. 
\end{definition}

For example, \emph{double-sided Bernoulli initial data} with \emph{parameters} $b_1$ and $b_2$ (also called \emph{double sided $(b_1, b_2)$-Bernoulli initial data}) arises when the $\varphi (i)$ are all mutually independent, such that $\varphi_i^{(y)}$ and $\varphi_i^{(x)}$ are independent $0-1$ Bernoulli random variables, with means $b_1$ and $b_2$, respectively. 

Under this initial data, paths can enter from both the $x$-axis and the $y$-axis; each vertex on the positive $x$-axis is an entrance point for a path with probability $b_2$, and each vertex on the positive $y$-axis is an entrance point for a path with probability $b_1$. Here, the $\varphi (i)$ are mutually independent. However, they need not be in general; see, for instance, Definition 3.3 of \cite{PTAEPSSVMHSID}. 

Now let us define initial data for the ASEP. 

\begin{definition}

\label{initialprocess}

Let $\varphi = (\varphi (1), \varphi (2), \ldots )$ be a stochastic process, where $\varphi (i) = \big( \varphi_i^{(x)}, \varphi_i^{(y)} \big) \in \{ 0, 1 \} \times \{ 0, 1 \}$. Define the \emph{ASEP with initial data $\varphi$} to be the ASEP with site $i \in \mathbb{Z}_{> 0}$ initially occupied if and only if $\varphi_i^{(x)} = 1$, and with site $i \in \mathbb{Z}_{\le 0}$ initially occupied if and only if $\varphi_{1 - i}^{(y)} = 1$. 

\end{definition}

One can define double-sided Bernoulli initial data for the ASEP in the same way as done for the stochastic six-vertex model above. Now, this means that sites at or to the left of $0$ are initially occupied with probability $b_1$ and that sites to the right of $0$ are occupied with probability $b_2$.

\subsection{Vertex Models and Interacting Particle Systems}

\label{InteractingParticles}

Let $\mathcal{P}^{(T)}$ denote the restriction of the random six-vertex path ensemble (given by the measure $\mathcal{P} = \mathcal{P} (\delta_1, \delta_2)$ from Section \ref{StochasticVertex}) to the strip $\mathbb{Z}_{> 0} \times [0, T]$. We will use the probability measure $\mathcal{P}^{(T)}$ to produce a discrete-time interacting particle system on $\mathbb{Z}_{> 0}$, defined up to time $T - 1$, as follows. 

Sample a line ensemble $\mathcal{E}$ randomly under $\mathcal{P}^{(T)}$, and consider the arrow configuration it associates to some vertex $(p, t) \in \mathbb{Z}_{> 0} \times [1, T]$. We will place a particle at site $p$ and time $t - 1$ if and only if a path vertically enters through the vertex $(p, t)$ (so if and only if the arrow configuration at $(p, t)$ is given by either the second or third type in Figure \ref{sixvertexfigure}). Therefore, the paths in the path ensemble $\mathcal{E}$ correspond to space-time trajectories of the particles. 

Observe in particular that paths entering through the $x$-axis correspond to the particles that are ``initially'' in the system, and that paths entering through the $y$-axis correspond to particles that ``enter'' the system after time $0$. Particles of the former and latter types will be colored red and blue, respectively (in analogy with the coloring described at the end of Section \ref{StochasticVertex}). 

As in the ASEP, we \emph{tag} these particles as follows. At any time $t \ge 0$, let the positions of the particles be $\cdots < p_{-1} (t) < p_0 (t) < p_1 (t) < \cdots $, where $p_0 (t)$ refers to the position of the leftmost red particle and $p_{-1} (t)$ refers to the position of the rightmost blue particle. Observe that there are finitely many (denoted $N(t) \le t$) red particles in the system at time $t$. Thus, we may rewrite the locations of the particles as $p (t) = \big( p_{-N(t)} (t), p_{1 - N(t)} (t), \ldots \big)$.

\subsection{Convergence Results} 

\label{ConvergenceProcesses}

In this section we explain the result of this paper, which is a limit degeneration under which the stochastic six-vertex model converges to the ASEP. This was originally predicted by Borodin, Corwin, and Gorin \cite{SSVM} (see equation (6) of Section 2.2 of that paper). We will explain their heuristic behind this result later, in Section \ref{Spin12}. 

\begin{thm}

\label{vertexmodelprocess} 

Fix real numbers $R, L \ge 0$ and a stochastic process $\varphi = \big( \varphi (1), \varphi (2), \ldots \big)$, with $\varphi (i) = \big( \varphi_i^{(x)}, \varphi_i^{(y)} \big) \in \{ 0, 1 \} \times \{ 0, 1 \}$. Consider the ASEP with left jump rate $L$, right jump rate $R$, and initial data $\varphi$. Denote the tagged particles for this ASEP by $\cdots < X_{-1} (t) < X_0 (t) < X_1 (t) < \cdots $. 

Let $\varepsilon > 0$ be a real number, and denote $\delta_1 = \delta_{1; \varepsilon} = \varepsilon L$ and $\delta_2 = \delta_{2; \varepsilon} = \varepsilon R$; assume that $\delta_1, \delta_2 \in [0, 1)$. Consider the stochastic six-vertex model $\mathcal{P} (\delta_1, \delta_2)$, with initial data $\varphi$. Let $p(t) = \big( p_{-N(t)} (t), p_{1 - N(t)} (t), p_{2 - N(t)} (t), \ldots \big)$ denote the particle configuration of this model at time $t$.  

Fix a positive integer $k$; integers $i_1, i_2, \ldots , i_k$; positive real numbers $t_1, t_2, \ldots , t_k > 0$; and a finite subset $S \subset \mathbb{Z}$. Denoting $q_t (i) = p_t (i) - t$ for each $i, t \in \mathbb{Z}$, we have that 	
\begin{flalign}
\label{exclusionvertexconvergencefiniteset}
\displaystyle\lim_{\varepsilon \rightarrow 0} \mathbb{P} \Big[ q_{i_1} (\lfloor \varepsilon^{-1} t_1 \rfloor), q_{i_2} (\lfloor \varepsilon^{-1} t_2 \rfloor), \ldots , q_{i_k} (\lfloor \varepsilon^{-1} t_k \rfloor) \in S \big] = \mathbb{P} \Big[ X_{i_1} (t_1), X_{i_2} (t_2), \ldots , X_{i_k} (t_k) \in S \Big].
\end{flalign}

\end{thm}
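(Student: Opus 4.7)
The plan is to follow the three-step program outlined in the introduction: realize both processes as deterministic functionals of independent families of clocks, restrict both to a bounded spatial window, and couple the discrete clocks to the continuous ones so that the offset six-vertex trajectories converge to the ASEP trajectories as $\varepsilon \to 0$. To begin, I would realize the ASEP with rates $(L, R)$ via Harris's graphical construction: attach to each site $i \in \mathbb{Z}$ two independent Poisson processes of rates $L$ and $R$ and read off the trajectories $X_j(t)$ via the usual exclusion rule, so that $\bigl(X_j(t)\bigr)_{j, t}$ becomes a deterministic functional of the clocks.

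For the offset six-vertex model I would build an analogous graph by attaching to every $(p, s) \in \mathbb{Z}_{>0} \times \mathbb{Z}_{\ge 1}$ a pair of independent Bernoulli marks with parameters $\delta_1 = \varepsilon L$ and $\delta_2 = \varepsilon R$, and re-expressing the update rule of Figure \ref{sixvertexfigure} as a deterministic function of these marks (the first is consulted at a vertex with only a vertical arrow entering, the second at a vertex with only a horizontal arrow entering). As is indicated in Section \ref{Spin12}, a direct computation then shows that under the offset $q = p - t$ a single time step sends $\Delta q$ to $-1$ with probability $\delta_1 \sim \varepsilon L$, to $+1$ with probability $\sim \varepsilon R$, to $0$ with the remaining probability $1 - O(\varepsilon)$, and to any $j \ge 2$ with probability $O(\varepsilon^j)$; after rescaling time by $\varepsilon^{-1}$, only the nearest-neighbor jumps survive and do so at rates $L$ and $R$.

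Next I would carry out the spatial restriction of Section \ref{BoundedModelProcess}. Fix a window $[-M, M]$ large enough to contain all relevant tagged particles up to time $\max_j t_j$. I would define restricted versions of both processes by suppressing every clock or Bernoulli mark outside an enlarged window $[-M', M']$ and show, via a speed-of-propagation bound uniform in $\varepsilon$ (a Borel--Cantelli argument applied to the total mass of nearby marks, together with the exclusion rule), that the restricted and unrestricted trajectories coincide on $[-M, M] \times [0, \max_j t_j]$ with probability tending to $1$ as $M' \to \infty$. Having reduced to a finite collection of sites, I would couple the Bernoulli and Poisson clocks by thinning: a Bernoulli$(\varepsilon r)$ sequence at integer times, rescaled by $\varepsilon^{-1}$, converges almost surely under the standard coupling to a rate-$r$ Poisson process on $\mathbb{R}_{\ge 0}$. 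On the probability-$(1 - o(1))$ event that no two rescaled marks coincide, the restricted update rules of the two models agree particle-by-particle, so the trajectories converge almost surely on the window and \eqref{exclusionvertexconvergencefiniteset} follows.

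The main obstacle I anticipate is the spatial restriction. Because a particle of the offset vertex model can in principle propagate many sites to the right in a single discrete time step, influences can \emph{a priori} travel arbitrarily fast, and one must prove a speed-of-propagation estimate uniform in $\varepsilon$ that controls the probability of a chain of Bernoulli marks carrying a disturbance from outside $[-M', M']$ into $[-M, M]$ within rescaled time $\max_j t_j$. This is the technical heart of Section \ref{BoundedConverge}; once it is in place, the coupling step in Section \ref{ModelProcessConverge} reduces to a clean comparison of Bernoulli and Poisson clocks.
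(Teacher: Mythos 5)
Your proposal follows essentially the same route as the paper: Harris's graphical construction for the ASEP, a discrete time graph for the offset six-vertex model, truncation to a bounded window justified by a speed-of-propagation estimate uniform in $\varepsilon$ (which you correctly identify as the technical heart, owing to the unbounded right jumps), and a Bernoulli-to-Poisson coupling of the clocks under which the long jumps and simultaneous rings disappear in the limit. The only detail you leave implicit is that the left-of-origin particles of the offset model enter one per discrete time step rather than all at time zero, but this mismatch lives in a rescaled time window of length $O(\varepsilon M)$ and is handled by the same vanishing-probability bad-event bookkeeping you already set up.
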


Observe that the stochastic six-vertex model in Theorem \ref{vertexmodelprocess} is \emph{offset by the diagonal}, meaning that we consider the \emph{offset particle positions} $q_i (t) = p_i (t) - t$ instead of the original particle positions $p_i (t)$. Theorem \ref{vertexmodelprocess} suggests that the current of this offset model should converge to the current of the ASEP; this is stated explicitly in the following corollary. 

\begin{cor}

\label{currentmodelprocess}

Fix real numbers $R, L \ge 0$ and a stochastic process $\varphi = \big( \varphi (1), \varphi (2), \ldots \big)$, with $\varphi (i) = \big( \varphi_i^{(x)}, \varphi_i^{(y)} \big) \in \{ 0, 1 \} \times \{ 0, 1 \}$. Let $\varepsilon > 0$ be a real number, and denote $\delta_1 = \delta_{1; \varepsilon} = \varepsilon L$ and $\delta_2 = \delta_{2; \varepsilon} = \varepsilon R$; assume that $\delta_1, \delta_2 \in (0, 1)$. Also fix $r \in \mathbb{R}$, $t \in \mathbb{R}_{> 0}$, and $x \in \mathbb{Z}$.

Let $p_{\varepsilon} (x; r) = \mathbb{P}_V \big[ \mathfrak{H} \big( x + \lfloor \varepsilon^{-1} t \rfloor, \lfloor \varepsilon^{-1} t \rfloor \big) \ge r \big]$, where the probability $\mathbb{P}_V$ is under the stochastic six-vertex model $\mathcal{P} (\delta_1, \delta_2)$ with initial data $\varphi$. Furthermore, let $p (x; r) = \mathbb{P}_A \big[ J_t \big( x \big) \ge r \big]$, where the probability $\mathbb{P}_A$ is under the ASEP with left jump rate $L$, right jump rate $R$, and initial data $\varphi$. 

Then, $\lim_{\varepsilon \rightarrow 0} p_{\varepsilon} (x; r) = p (x; r)$. 

\end{cor}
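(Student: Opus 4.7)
The strategy is to deduce Corollary~\ref{currentmodelprocess} from Theorem~\ref{vertexmodelprocess} in two steps. First, I would rewrite the events $\{\mathfrak{H}(X,Y) \ge r\}$ and $\{J_t(x) \ge r\}$ as events about the position of a single tagged particle; second, I would pass from the convergence on finite sets $S$ given by the theorem to convergence on a half-line by a routine tightness argument. Since $\mathfrak{H}$ and $J_t$ are integer-valued, it suffices to treat $r \in \mathbb{Z}$.

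For the first step, I would invoke the path-to-particle correspondence of Section~\ref{InteractingParticles} to rewrite the number of blue (resp.\ red) paths intersecting the line $y = Y$ at positions to the right of $X$ (resp.\ at or to the left of $X$) as the number of blue (resp.\ red) particles at the associated particle-time $Y^* \in \{Y-1, Y\}$ that lie at positions $> X$ (resp.\ $\le X$). Letting $M$ denote the smallest index $i$ with $p_i(Y^*) > X$, a brief case analysis using the strict ordering of the particle positions yields
\[
\mathfrak{H}(X,Y) = \#\bigl\{ i < 0 : p_i(Y^*) > X \bigr\} - \#\bigl\{ i \ge 0 : p_i(Y^*) \le X \bigr\} = -M.
\]
Indeed, if $M \le -1$ then the blue particles with indices $M, M+1, \ldots, -1$ are precisely those to the right of $X$ (contributing $-M$) while no red satisfies $p_i(Y^*) \le X$; and if $M \ge 0$ then the red particles with indices $0, 1, \ldots, M-1$ are precisely those at or left of $X$ (contributing $M$) while no blue satisfies $p_i(Y^*) > X$. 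It follows that $\{\mathfrak{H}(X,Y) \ge r\} = \{p_{-r}(Y^*) > X\}$, and the identical reasoning applied to the ASEP particle system yields $\{J_t(x) \ge r\} = \{X_{-r}(t) > x\}$.

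Now specialize to $X = x + \lfloor \varepsilon^{-1} t \rfloor$ and $Y = \lfloor \varepsilon^{-1} t \rfloor$; using the diagonal offset $q_i(\tau) = p_i(\tau) - \tau$, the six-vertex event becomes a shift of $\{q_{-r}(\lfloor \varepsilon^{-1} t \rfloor) > x\}$ by at most a single lattice step in $x$ (depending on whether $Y^* = Y$ or $Y^* = Y-1$). Since Theorem~\ref{vertexmodelprocess} is stated uniformly for every $x \in \mathbb{Z}$, such a shift is absorbed in the limit. Applying Theorem~\ref{vertexmodelprocess} with $k = 1$, $i_1 = -r$, $t_1 = t$, and $S$ ranging over finite subsets of $\mathbb{Z}$ gives the convergence of probabilities on every finite $S$. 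To upgrade this to the half-line, I would establish uniform tightness of the family $q_{-r}(\lfloor \varepsilon^{-1} t \rfloor)$: for any given $\eta > 0$, choose a finite interval $[a, b] \subset \mathbb{Z}$ with $\mathbb{P}_A[X_{-r}(t) \in [a, b]] > 1 - \eta$ (possible since $X_{-r}(t)$ is almost surely finite), whereupon Theorem~\ref{vertexmodelprocess} applied to $S = [a, b]$ forces $\mathbb{P}_V[q_{-r}(\lfloor \varepsilon^{-1} t \rfloor) \notin [a, b]] < 2\eta$ for all sufficiently small $\varepsilon$. Combining this uniform tightness with the convergence on finite intervals then yields $\lim_{\varepsilon \to 0} p_\varepsilon(x; r) = p(x; r)$.

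The principal obstacle, as I see it, is carrying out the identification in the first step carefully---pinning down precisely which particle-time $Y^* \in \{Y-1, Y\}$ corresponds to the line $y = Y$ under the conventions of Section~\ref{InteractingParticles}, and confirming that the resulting $O(1)$ shift in $x$ does not affect the limit (as it does not, for the reason just given). The remaining tightness-plus-convergence argument is entirely routine.
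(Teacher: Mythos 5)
Your proposal is correct. The first half---rewriting $\{\mathfrak{H}(X,Y)\ge r\}$ and $\{J_t(x)\ge r\}$ as $\{p_{-r}(Y^*)>X\}$ and $\{X_{-r}(t)>x\}$ via the strict ordering of tagged particles---is exactly the identification the paper makes in its displayed identity \eqref{sumva}, which expresses $p(x;r)$ and $p_\varepsilon(x;r)$ as sums over $k>x$ of the point probabilities of the tagged particle $-r$. Where you diverge is in how the half-line event is recovered from finite-set convergence. The paper establishes an explicit, $\varepsilon$-uniform exponential bound $\mathbb{P}_V\big[p_{-r}(\lfloor\varepsilon^{-1}t\rfloor)=k+\lfloor\varepsilon^{-1}t\rfloor\big]\le k!^{-1}(tR)^{|k|}$ (and the analogue with $L$ for $k<0$), obtained by noting that a net displacement of $k$ forces at least $k$ vertices of the probability-$\delta_2$ type along the particle's trajectory, and then concludes by dominated convergence. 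You instead use a soft tightness argument: since the limiting law of $X_{-r}(t)$ is a proper probability distribution, convergence on a finite interval carrying mass $1-\eta$ forces the prelimit laws to carry mass at least $1-2\eta$ there for small $\varepsilon$, so no mass escapes to infinity (this is essentially Scheff\'e's lemma for integer-valued variables). Both are valid; yours requires no model-specific input beyond Theorem \ref{vertexmodelprocess} and the almost-sure finiteness of the particle positions, and is closer in spirit to the one-line ``cylinder functions determine the process'' remark with which the paper opens its proof, while the paper's version buys a quantitative, uniform-in-$\varepsilon$ tail estimate that would also give convergence of moments. Your care with the $Y^*\in\{Y-1,Y\}$ convention and the resulting $O(1)$ shift is a point the paper silently elides; as you note, it is harmless in the limit.
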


\begin{proof}[Proof of Corollary \ref{currentmodelprocess} Assuming Theorem \ref{vertexmodelprocess}] 

This corollary can be deduced from Theorem \ref{vertexmodelprocess} and the fact that the types of \emph{cylinder functions} discussed in the statement of Theorem \ref{vertexmodelprocess} define the ASEP and stochastic six-vertex model completely. 

However, let us suggest an alternative proof. Observe that 
\begin{flalign}
\label{sumva}
p(x; r) = \displaystyle\sum_{k = x + 1}^{\infty} \mathbb{P}_A \big[ X_{-r} (t) = k \big]; \qquad p_{\varepsilon} (x; r) = \displaystyle\sum_{k = x + 1}^{\infty} \mathbb{P}_V \big[ p_{-r} (\lfloor \varepsilon^{-1} t \rfloor) = k + \lfloor \varepsilon^{-1} t \rfloor \big]. 
\end{flalign}

By Theorem \ref{vertexmodelprocess}, each summand in the first sum of \eqref{sumva} converges to the corresponding summand in the second sum in \eqref{sumva}. Thus, it suffices to establish a uniform, exponential (in $|k|$)	 estimate on probabilities of the form $\mathbb{P}_V \big[ p_{-r} (\lfloor \varepsilon^{-1} t \rfloor) = k + \lfloor \varepsilon^{-1} t \rfloor \big]$. 

To that end, we first assume that $k \ge 0$. Observe that the event that $p_{-r} (\lfloor \varepsilon^{-1} t \rfloor) = k + \lfloor \varepsilon^{-1} t \rfloor$ is contained in the event that there exist at least $k$ vertices, below the line $y = \lfloor \varepsilon^{-1} t \rfloor + 1$ and on the path corresponding to particle $-r$, that all share the fourth (probability $\delta_2$) arrow configuration depicted in Figure \ref{sixvertexfigure}. The probability of this occurring is at most $\binom{\lfloor \varepsilon^{-1} t \rfloor}{k} \delta_2^k \le k!^{-1} (tR)^k$, from which it follows that $\mathbb{P}_V \big[ p_{-r} (\lfloor \varepsilon^{-1} t \rfloor) = k + \lfloor \varepsilon^{-1} t \rfloor \big] \le k!^{-1} (tR)^k$ when $k \ge 0$. 

Similarly, $\mathbb{P}_V \big[ p_{-r} (\lfloor \varepsilon^{-1} t \rfloor) = k + \lfloor \varepsilon^{-1} t \rfloor \big] \le k!^{-1} (tL)^k$ when $k < 0$. Hence, in either case, $\mathbb{P}_V \big[ p_{-r} (\lfloor \varepsilon^{-1} t \rfloor) = k + \lfloor \varepsilon^{-1} t \rfloor \big]$ decays exponentially in $|k|$ (independently of $\varepsilon > 0$).  

Thus, the corollary follows from \eqref{sumva}, Theorem \ref{vertexmodelprocess}, and the dominated convergence theorem. 
\end{proof}

\section{The Offset Stochastic Six-Vertex Model}

\label{Spin12}

Recall from Section \ref{InteractingParticles} that the stochastic six-vertex model can be viewed as an interacting particle system. In this section we explicitly describe the dynamics of the \emph{offset model} $q(t) = (q_{-N(t)} (t), q_{1 - N(t)} (t), \ldots )$ (where $q_i (t) = p_i (t) - t$), which is a particle system on $\mathbb{Z}$ instead of $\mathbb{Z}_{> 0}$. We omit the verification that these are indeed the dynamics of $q$, as this amounts to recalling the definition of the stochastic six-vertex model as an interacting particle system (explained in Section \ref{InteractingParticles}; see also Section 2.2 of \cite{SSVM}) and appropriately shifting to the right. 

In what follows we refer to the particle at site $q_i (t)$, at time $t$, as \emph{particle $i$.} 

\begin{enumerate}

\item{ At time $t = 0$, place a (red) particle at site $i \in \mathbb{Z}_{\ge 1}$ if and only if $\varphi_i^{(x)} = 1$. Then order the positions of the particles $q_0 (0) < q_1 (0) < \cdots$. Also set $N(0) = 0$. }

\item{ At time $t \ge 1$, set $N(t) = N(t - 1) + 1$ if and only if $\varphi_t^{(y)} = 1$; that is, add a new (blue) particle to the system at time $t$ if and only if $\varphi_t^{(y)} = 1$. 

\begin{enumerate}
\item{ If $\varphi_t^{(y)} = 1$, then $\mathbb{P} \big[ q_{-N(t)} (t) = j + 1 - t \big| q(t - 1) \big] = (1 - \delta_2) \delta_2^j$ for each integer $j \in [0, k - 1]$, and $\mathbb{P} \big[ q_{-N(t)} = k + 1 - t \big| q(t - 1) \big] = \delta_2^k$; here, $k = t + q_{-N(t - 1)} (t - 1) - 2$. Equivalently, if a new particle is added to the system at time $t$, then it is placed at site $1 - t$ and then jumps to the right according to a $\delta_2$-geometric distribution constrained to stay to the left of particle $-N(t - 1)$. }

\end{enumerate}
}

\item{ Now let $i > - N(t)$. Conditioned on $q (t - 1)$ and $q_{i - 1} (t)$, particle $i$ jumps as follows. 

\begin{enumerate}

\item{ If $q_{i - 1} (t) = q_i (t - 1) - 1$, then $\mathbb{P} \big[ q_i (t) = q_i (t - 1) + j | q(t - 1), q_{i - 1} (t) \big] = (1 - \delta_2) \delta_2^j$ for each $j \in [0, k - 1]$, and $\mathbb{P} \big[ q_i (t) = q_i (t - 1) + k | q(t - 1), q_{i - 1} (t) \big] = \delta_2^k$; here, $k = q_{i + 1} (t - 1) - q_i (t - 1) - 1$. Equivalently, if the site directly to the left of particle $i$ is occupied, then it jumps to the right under a $\delta_2$-geometric distribution constrained to stay left of particle $i + 1$. }

\item{ If $q_{i - 1} (t) < q_i (t - 1) - 1$ (meaning that the site directly to the left of particle $i$ is unoccupied), then particle $i$ jumps as follows. 

\begin{enumerate}

\item{ We have that $\mathbb{P} \big[ q_i (t) = q_i (t - 1) - 1 \big| q(t - 1), q_{i - 1} (t) \big] = \delta_1$. Equivalently,  particle $i$ jumps to the left with probability $\delta_1$.}

\item{ We have that $\mathbb{P} \big[ q_i (t) = q_i (t - 1) + j \big| q(t - 1), q_{i - 1} (t) \big] = (1 - \delta_1) (1 - \delta_2) \delta_2^j$ for each $j \in [0, k - 1]$, and $\mathbb{P} \big[ q_i (t) = q_i (t - 1) + k \big] = (1 - \delta_1) \delta_2^k$; here, $k = q_{i + 1} (t - 1) - q_i (t - 1) - 1$. Equivalently, if particle $i$ did not jump left, then it jumps to the right under a $\delta_2$-geometric distribution constrained to stay to the left of particle $i + 1$. }

\end{enumerate}  }

\end{enumerate}
}

\end{enumerate}

Now, suppose we set $\delta_1 = \varepsilon L$, $\delta_2 = \varepsilon R$, scale time by $\varepsilon^{-1}$, and send $\varepsilon$ to $0$. Then, the dynamics of any single particle converges to the dynamics of an ASEP particle, in the same position and surrounding particle configuration. This provides some heuristic for Theorem \ref{vertexmodelprocess}, which was originally given in Section 2.2 of \cite{SSVM} (see also Section 6.5 of \cite{HSVMRSF}). In fact, this observation would result in a direct proof of this theorem if both processes had only finitely many particles. 

However, the theorem does not follow in the same way when both systems have infinitely many particles, due to the potential interaction of particles that are far away. In fact, the same issue originally posed trouble 45 years ago when probabilists were attempting to define the ASEP. In the next section we explain how this was overcome by providing a precise definition of the ASEP.

\section{Time Graphs} 

\label{TimeGraph}

In this section we introduce the notion of \emph{time graphs}, which were originally used by Harris \cite{ASMPGM} to define the ASEP with infinitely many particles. We begin in Section \ref{InfiniteExclusion} by defining time graphs for the ASEP. In Section \ref{OffsetGraph}, we introduce an analogous time graph for the stochastic six-vertex model. These time graphs will be used to exhibit couplings (in Section \ref{BoundedConverge}) that will facilitate the proof of Theorem \ref{vertexmodelprocess}.

\subsection{Defining the ASEP With Infinitely Many Particles}

\label{InfiniteExclusion}

Recall that, in the stochastic six-vertex model, the order in which particles jump is fixed; left particles jump before right particles. In contrast, the dynamics of the ASEP are not sequential; there is no specified order in which particles must jump. Although almost surely no particles jump simultaneously, it could be the case that infinitely many clocks will ring in any arbitrarily small time interval. Thus, it might not be immediately apparent that the ASEP is well-defined with infinitely many particles, due to the possibility that far apart particles can interact. 

In fact, shortly after Spitzer introduced the ASEP, a number of papers were published confirming its well-posedness; see for instance the works of Holley \cite{CIIPS}, Harris \cite{ASMPGM, NMIP}, and Liggett \cite{ETPS}. Let us explain how one of these proofs works, for it will be useful to us later. 

In particular, we will describe the \emph{graphical} definition of the ASEP, which is due to Harris; see \cite{ASMPGM} or Section 2.1.1 of Corwin's survey \cite{EUC}. Instead of associating clocks to each particle, we associate mutually independent right and left exponential clocks (of rates $R$ and $L$, respectively) to each site $i \in \mathbb{Z}$. For each $i \in \mathbb{Z}$, let $\mathcal{R}^{(i)}$ denote the set of times $t > 0$ at which site $i$'s right clock rings, and let $\mathcal{L}^{(i)}$ denote the set of times $t > 0$ at which site $i$'s left clock rings. 

Now let $\mathcal{G} = \bigsqcup_{i \in \mathbb{Z}} \big( \bigsqcup_{t \in \mathcal{R}^{(i)}} \{ (t; i, i + 1) \} \cup \bigsqcup_{t \in \mathcal{L}^{(i)}} \{ (t; i, i - 1) \} \big) \subset \mathbb{R}_{> 0} \times \mathbb{Z} \times \mathbb{Z}$. We refer to $\mathcal{G}$ as the \emph{time graph} of the ASEP. The term ``time graph'' is used since it is typically associated with a directed graph on $\mathbb{Z} \times \mathbb{R}_{> 0}$, in which there is a directed edge from $(i_1, t_1)$ to $(i_2, t_2)$ if and only if $t_1 = t_2$ and $(t_1; i_1, i_2) \in \mathcal{G}$; we will not require this pictorial representation here, but see Figure 6 in Section 2.1.1 of \cite{EUC} for an example. 

The elements of $\mathcal{G}$ will be ``jump instructions'' for particles of the ASEP in the following way. If $(t; i, j) \in \mathcal{G}$ and there is a particle at site $i$ at time $t^-$, then the particle will attempt to jump to site $j$ at time $t$. If the destination is unoccupied at time $t^{-}$, the jump is performed; otherwise it is not. 

However, there remains the issue that infinitely many arrows are contained in any horizontal strip $\{ (x, y) \in \mathbb{Z} \times \mathbb{R}_{\ge 0} : y \in [a, b] \}$. Thus, we will show that we can partition $\mathbb{Z}$ into infinitely many finite intervals that do not ``interact.'' 

In what follows, we call a site $i$ \emph{inactive} if $\mathcal{L}^{(i)} \cap [0, T]$ and $\mathcal{R}^{(i)} \cap [0, T]$ are empty for each $j \in \{ i - 1, i, i + 1 \}$. If site $i$ is not inactive, then it is \emph{active}. 

\begin{lem}

\label{noarrows}

Fix some positive real number $T$. Almost surely, there exists a random integer sequence $\cdots < n_{-2} < n_{-1} < 0 < n_0 < n_1 < n_2 < \cdots$ such that each $n_j$ is inactive.   
\end{lem}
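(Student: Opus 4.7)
\textbf{Proof plan for Lemma \ref{noarrows}.}
The event that site $i$ is inactive depends only on the six independent exponential clocks attached to sites $i-1$, $i$, and $i+1$: it is the intersection of the events that none of these six clocks fires during $[0,T]$. Since a rate-$\lambda$ exponential clock has no firings in $[0,T]$ with probability $e^{-\lambda T}$, independence of all clocks gives
\[ p \;:=\; \mathbb{P}[\text{site } i \text{ is inactive}] \;=\; e^{-3(L+R)T}, \]
which is strictly positive because $R > 0$ by hypothesis.

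Across different sites the events $\{\text{site } i \text{ is inactive}\}$ are not independent, since adjacent instances of this event share clocks. To circumvent this, I would restrict attention to the arithmetic progression $3\mathbb{Z}$. For distinct $k, k' \in \mathbb{Z}$ the triples $\{3k-1, 3k, 3k+1\}$ and $\{3k'-1, 3k', 3k'+1\}$ are disjoint, so the events $A_k := \{\text{site } 3k \text{ is inactive}\}$ are mutually independent, each of probability $p > 0$. Since $\sum_{k \ge 1} \mathbb{P}[A_k] = \infty$, the second Borel--Cantelli lemma guarantees that almost surely infinitely many of the $A_k$ with $k \ge 1$ occur; applying the same argument to $k \le -1$ yields infinitely many inactive sites on the negative side as well.

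Let $\mathcal{I} \subset \mathbb{Z}$ denote the (random) set of inactive sites. On the almost-sure event just established, both $\mathcal{I} \cap \mathbb{Z}_{>0}$ and $\mathcal{I} \cap \mathbb{Z}_{<0}$ are infinite, so I may enumerate the positive elements of $\mathcal{I}$ in increasing order as $n_0 < n_1 < n_2 < \cdots$ and the negative elements in decreasing order as $n_{-1} > n_{-2} > \cdots$, producing the required biinfinite sequence with $n_{-1} < 0 < n_0$. I do not anticipate a genuine obstacle; the only subtlety is the failure of independence for inactivity events at nearby sites, and this is resolved by the elementary thinning argument above. The rest of the proof is a direct Poissonian calculation on the time graph.
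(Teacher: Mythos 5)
Your argument is correct and is essentially the paper's proof: both exploit the fact that inactivity events at sites spaced three apart are mutually independent, each occurring with probability $e^{-3(L+R)T} > 0$ (positive for any finite rates, so the appeal to $R>0$ is unnecessary), and then conclude by Borel--Cantelli. The only difference is that you apply the second Borel--Cantelli lemma to the individual sites $3k$, whereas the paper applies the first Borel--Cantelli lemma to the block events that every site in $[4^k, 4^{k+1}]$ is active, which in addition yields the quantitative bound $c^{4^k}$ that is reused later in the proof of Lemma \ref{interactionprocess}; for the lemma as stated, your version suffices.
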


\begin{proof} 

For any integer $j$, the sets $\mathcal{L}^{(j)}$ and $\mathcal{R}^{(j)}$ are independent. Therefore, $\mathbb{P} \big[ \big| \big( \mathcal{R}^{(j)} \cup \mathcal{L}^{(j)} \big) \cap [0, T] \big| \ge 1\big] = 1 - e^{-T (L + R)}$. Since the sets $\{ \mathcal{L}^{(j)} \cup \mathcal{R}^{(j)} \}_{j \in \mathbb{Z}}$ are mutually independent, we deduce that the probability that any fixed site $i$ site is active is $c = 1 - e^{-3 T (L + R)} < 1$.  

Observe that, for any $i \in \mathbb{Z}$, the events that site $i + 3k$ is inactive are mutually independent (where $k$ ranges across $\mathbb{Z}$). Thus, the probability that all integers $i \in [4^k, 4^{k + 1}]$ are active is at most equal to $c^{4^k}$. Since $\sum_{k = 0}^{\infty} c^{4^k} < \infty$, the Borel-Cantelli lemma shows that there almost surely exist integers $0 < k_1 < k_2 < \cdots $ satisfying the following. For each positive integer $j$, there is some $n_j \in [4^{k_j}, 4^{k_j + 1}]$ that is inactive. This defines the subsequence $n_0, n_1, n_2, \ldots $. The complementary subsequence sequence $n_{-1}, n_{-2}, \ldots $ can be defined similarly. 
\end{proof}

Lemma \ref{noarrows} states that we can partition $\mathbb{Z}$ into the disjoint union of intervals $\bigsqcup_{j \in \mathbb{Z}} I_j$, where $I_j = [n_j, n_{j + 1} - 1]$, such that any particle that begins in some $I_j$ remains in $I_j$ throughout the time interval $[0, T]$. Thus, the ASEP on $\mathbb{Z}$ can be defined up to time $T$ through the following four steps. 

\begin{enumerate}

\item{ Form the ASEP time graph $\mathcal{G}$.} 

\item{ Using Lemma \ref{noarrows}, find inactive sites $\cdots , n_{-1} < n_0 < n_1 < \cdots $. Partition $\mathbb{Z} = \bigsqcup_{j \in \mathbb{Z}} I_j$.} 

\item{Use $\mathcal{G}$ to run the ASEP on each finite interval $I_j$ up to time $T$; this is well-defined since $I_j$ contains only finitely many particles.}

\item{Concatenate the processes on the intervals $I_j$ to produce the ASEP on $\mathbb{Z}$.}

\end{enumerate}

\subsection{The Time Graph of the Offset Stochastic Six-Vertex Model}

\label{OffsetGraph}

In this section we define a \emph{discrete time graph} and use it to resample the offset stochastic six-vertex model.

Associated with each integer $i \in \mathbb{Z}$ will be two random sets $\mathcal{L}^{(i)}, \mathcal{R}^{(i)} \subset \mathbb{Z}_{> 0} \times \mathbb{Z}$, such that $\mathcal{L}^{(i)}$ and $\mathcal{R}^{(i)}$ each contain at most one element of the form $(t, j)$ for any fixed $t \in \mathbb{Z}_{> 0}$; here, $t$ will refer to the time of an attempted jump and $j$ will refer to the destination of the attempted jump. All sets will be mutually independent. 

The sets are sampled as follows. For each fixed $t \in \mathbb{Z}_{> 0}$, we append $(t, i - 1)$ to $\mathcal{L}^{(i)}$ with probability $\delta_1$; otherwise, with probability $1 - \delta_1$, there is no element of the form $(t, j)$ in $\mathcal{L}^{(i)}$. Similarly, for each fixed $t \in \mathbb{Z}_{> 0}$ and $k \in \mathbb{Z}_{> 0}$, we append $(t, i + k)$ to $\mathcal{R}^{(i)}$ with probability $(1 - \delta_2) \delta_2^k$; otherwise, with probability $1 - \delta_2$, there is no element of the form $(t, j)$ in $\mathcal{R}^{(i)}$. 

Define the \emph{discrete time graph} $\mathcal{D} = \bigsqcup_{i \in \mathbb{Z}} \big( \bigsqcup_{(t, j) \in \mathcal{R}^{(i)}} \{ (t; i, j) \} \cup \bigsqcup_{(t, i - 1) \in \mathcal{L}^{(i)}} \{ (t; i, i - 1) \} \big) $. 

In what follows, we say that a particle at site $i$ \emph{tries to jump to site $j > i$ at time $t$} if it jumps to the minimal site $m \in [i, j]$ such that $m + 1$ was occupied at time $t - 1$; if no such $m$ exists, then the particle jumps to site $j$. 

Now let us use our time graph $\mathcal{D}$ to sample the offset stochastic six-vertex model. We omit the verification that this is indeed a resampling of the offset stochastic six-vertex model, as this follows directly from the description of the dynamics of the model given in Section \ref{Spin12}. 

The model is initialized as in the first step in Section \ref{Spin12}, by placing a (red) particle at site $i \in \mathbb{Z}_{\ge 1}$ at time $t = 0$ if and only if $\varphi_i^{(x)} = 1$, and then by ordering the positions of the particles $q_0 (0) < q_1 (0) < \cdots$ and setting $N(0) = 0$.

\begin{enumerate}

\item{ \label{1} At time $t \ge 1$, set $N(t) = N(t - 1) + 1$ if and only if $\varphi_t^{(y)} = 1$; that is, add a new (blue) particle to the system at time $t$ if and only if $\varphi_t^{(y)} = 1$. 

\begin{enumerate}
\item{ \label{1a} If $\varphi_t^{(y)} = 1$ and there does not exist any $(t, j) \in \mathcal{R}^{(1 - t)}$, then set $q_{-N(t)} (t) = 1 - t$. }

\item{ \label{1b} Assume that $\varphi_t^{(y)} = 1$ and that there exists some (and thus only one) $(t, j) \in \mathcal{R}^{(1 - t)}$. Then, particle $-N(t)$ tries to jump from site $1 - t$ to $j$ at time $t$. }

\end{enumerate}
}

\item{ \label{2} Now let $i > - N(t)$. Conditioned on $q (t - 1)$ and $q_{i - 1} (t)$, particle $i$ jumps as follows. 

\begin{enumerate}

\item{ \label{2a} Assume that $\big( t, q_i (t - 1) - 1 \big) \in \mathcal{L}^{q_i (t - 1)}$. 

\begin{enumerate}

\item{ \label{2a1} If site $q_i (t - 1) - 1$ is unoccupied at time $t$ (equivalently, if $q_{i - 1} (t) < q_i (t - 1) - 1$), then particle $i$ jumps left, so set $q_i (t) = q_i (t - 1) - 1$. }

\item{ \label{2a2} Otherwise (equivalently, if $q_{i - 1} (t) = q_i (t - 1) - 1$), particle $i$ does not jump left. In this case, we follow step \ref{2b1} and step \ref{2b2} below (the particle can still jump right if it tries but does not succeed to jump left).}

\end{enumerate}

}

\item{ \label{2b} Assume that $\big( t, q_i (t - 1) - 1 \big) \notin \mathcal{L}^{q_i (t - 1)}$.

\begin{enumerate}

\item{ \label{2b1} If no element of the form $(t, j)$ is in $\mathcal{R}^{q_i (t - 1)}$, then particle $i$ does not move and we set $q_i (t) = q_i (t - 1)$.}

\item{ \label{2b2} Assume that there exists some (and thus only one) $(t, j) \in \mathcal{R}^{q_i (t - 1)}$. Then, particle $i$ tries to jump from site $q_i (t - 1)$ to $j$ at time $t$.} 

\end{enumerate} }

\end{enumerate}  }

\end{enumerate}

\section{The \texorpdfstring{$[-M, N]$}{[-M, N]}-Bounded Stochastic Six-Vertex Model and ASEP}

\label{BoundedModelProcess}

As mentioned at the end of Section \ref{Spin12}, the presence of infinitely many particles in the ASEP and stochastic six-vertex model poses trouble for a direct proof of Theorem \ref{vertexmodelprocess}. To circumvent this issue, we introduce certain ``bounded versions'' of these two models. 

To define these models, we will fix positive integers $M$ and $N$, and recall the time graphs $\mathcal{G}$ and $\mathcal{D}$ from Section \ref{InfiniteExclusion} and Section \ref{OffsetGraph}. We will define certain truncated versions of $\mathcal{G}$ and $\mathcal{D}$, and then use these to define the bounded ASEP and stochastic six-vertex model. 

Specifically, form \emph{the $[-M, N]$-bounded time graph $\mathcal{G}^{[-M, N]}$} from $\mathcal{G}$ by removing all elements of the form $(t; i, j)$ with $i \notin [-M, N]$. Define \emph{the $[-M, N]$-bounded time graph $\mathcal{D}^{[-M, N]}$} from $\mathcal{D}$ in a similar way. 

Now, we define the \emph{$[-M, N]$-bounded ASEP with initial data $\varphi$} to be an interacting particle system $\big( \ldots , X_{-1}^{[-M, N]} (t), X_0^{[-M, N]} (t), X_1^{[-M, N]} (t), \ldots \big)$ on $\mathbb{Z}$ that follows the same dynamics as the ASEP with initial data $\varphi$, except that particles jump according to the $[-M, N]$-bounded time graph $\mathcal{G}^{[-M, N]}$ (instead of according to the original time graph $\mathcal{G}$). The $[-M, N]$-bounded ASEP coincides with the ASEP, modified so that no particle outside the interval $[-M, N]$ performs any jumps. 

Similarly, the \emph{$[-M, N]$-bounded offset stochastic six-vertex model with initial data $\varphi$} is an interacting particle system $\big( \ldots , q_{-1}^{[-M, N]} (t), q_0^{[-M, N]} (t), q_1^{[-M, N]} (t), \ldots \big)$ on $\mathbb{Z}$ that follows the same dynamics as the offset stochastic six-vertex model defined in Section \ref{OffsetGraph}, except that particles jump according to the $[-M, N]$-bounded discrete time graph $\mathcal{D}^{[-M, N]}$. 

We now will establish the convergence of the offset stochastic six-vertex model to the ASEP as follows. First, we show that the $[-M, N]$-bounded ASEP converges to the ASEP as $M$ and $N$ tend to $\infty$, and also establish a similar statement for the offset stochastic six-vertex model. Then, we show that the $[-M, N]$-bounded offset stochastic six-vertex model (with $\delta_1 = \varepsilon L$ and $\delta_2 = \varepsilon R$) converges to the $[-M, N]$-bounded ASEP, as $\varepsilon$ tends to $0$. The following three propositions provide more explicit statements. 

\begin{prop}

\label{boundedprocess} 

Consider the tagged particles $X(t) = \big( \ldots,  X_{-1} (t), X_0 (t), X_1 (t), \ldots \big)$ of the ASEP, as in Theorem \ref{vertexmodelprocess}. For each $M, N \in \mathbb{Z}_{> 0}$, denote the tagged particles of the corresponding $[-M, N]$-bounded ASEP by $X^{[-M, N]} (t) = \big( \ldots,  X_{-1}^{[-M, N]} (t), X_0^{[-M, N]} (t), X_1^{[-M, N]} (t), \ldots \big)$. 

Fix a positive integer $k$; integers $i_1, i_2, \ldots , i_k$; positive real numbers $t_1, t_2, \ldots , t_k$; and a finite set $S \subset \mathbb{Z}$. Then,
\begin{flalign*}
\displaystyle\lim_{M, N \rightarrow \infty} \mathbb{P} \big[ X_{i_1}^{[-M, N]} & (t_1), X_{i_2}^{[-M, N]} (t_2), \ldots , X_{i_k}^{[-M, N]} (t_k) \in S \big] = \mathbb{P} \big[  X_{i_1} (t_1), X_{i_2} (t_2), \ldots , X_{i_k} (t_k)  \in S \big]. 
\end{flalign*}

\end{prop}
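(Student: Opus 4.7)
The plan is to couple the ASEP and each $[-M, N]$-bounded ASEP on a single probability space by driving both with the same Harris time graph $\mathcal{G}$ (and the same initial configuration sampled from $\varphi$), so that $X^{[-M, N]}$ uses exactly the arrows of $\mathcal{G}^{[-M, N]} \subseteq \mathcal{G}$. Under this coupling it will suffice to show that the trajectories $\{X_{i_j}(t)\}_{j=1}^k$ and $\{X_{i_j}^{[-M, N]}(t)\}_{j=1}^k$ coincide on $[0, T]$, with $T = \max_j t_j$, on an event of probability tending to $1$ as $M, N \to \infty$.

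To produce such an event, I would apply Lemma \ref{noarrows} with this choice of $T$ to obtain a random bi-infinite sequence of inactive sites $\cdots < n_{-1} < n_0 < n_1 < \cdots$ with $n_j \to \pm \infty$ almost surely. As noted in Section \ref{InfiniteExclusion}, each inactive site $n_j$ acts as an impenetrable barrier during $[0, T]$: any arrow of $\mathcal{G}$ that could move a particle across $n_j$ must lie in $\mathcal{L}^{(j')}$ or $\mathcal{R}^{(j')}$ for some $j' \in \{n_j - 1, n_j, n_j + 1\}$, and all of these sets are empty on $[0, T]$. This barrier property is inherited by $\mathcal{G}^{[-M, N]} \subseteq \mathcal{G}$. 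Now let $A_{M, N}$ be the event that there exist inactive sites $n_-^\star$ and $n_+^\star$ satisfying
\[
-M \le n_-^\star < \min_{1 \le j \le k} X_{i_j}(0) \le \max_{1 \le j \le k} X_{i_j}(0) < n_+^\star \le N.
\]
On $A_{M, N}$, each tagged particle $X_{i_j}$ starts strictly between $n_-^\star$ and $n_+^\star$ and, by the barrier property, remains in this interval for all $t \in [0, T]$; moreover $[n_-^\star, n_+^\star] \subseteq [-M, N]$, so the arrows of $\mathcal{G}$ and $\mathcal{G}^{[-M, N]}$ agree throughout this interval. Consequently the trajectories of particles $i_1, \ldots, i_k$ are identical under the two dynamics, and in particular $X_{i_j}(t_j) = X_{i_j}^{[-M, N]}(t_j)$ for every $j$.

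Since $\mathcal{G}$ is independent of $\varphi$, each $X_{i_j}(0)$ is almost surely a finite integer, and $n_j \to \pm \infty$ almost surely, the events $A_{M, N}$ are monotone in $M$ and $N$ and $\mathbb{P}[A_{M, N}] \to 1$. Therefore
\[
\bigl| \mathbb{P}[X_{i_j}(t_j) \in S \text{ for all } j] - \mathbb{P}[X_{i_j}^{[-M, N]}(t_j) \in S \text{ for all } j] \bigr| \le \mathbb{P}[A_{M, N}^c] \longrightarrow 0,
\]
which is the proposition. There is no real obstacle: the only delicate point is the insistence on realizing the entire family of bounded and unbounded ASEPs on a single time graph, after which Lemma \ref{noarrows} reduces the statement to the elementary observation that inside a corridor flanked by inactive sites lying in $[-M, N]$ the bounded and unbounded dynamics are literally identical.
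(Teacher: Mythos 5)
Your proposal is correct and follows essentially the same route as the paper: couple the bounded and unbounded ASEP on a common Harris time graph, use the existence of inactive sites (the paper packages this as Lemma \ref{interactionprocess}, a quantitative restatement of Lemma \ref{noarrows}) to produce barrier sites inside $[-M,N]$ flanking the tagged particles' initial positions, and conclude that the two dynamics agree on the resulting corridor with probability tending to $1$.
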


\begin{prop}
	
\label{boundedvertex} 

Consider the offset stochastic six-vertex model $q(t) = \big( q_{-N(t)} (t), q_{1 - N(t)} (t), \ldots \big)$, as in Theorem \ref{vertexmodelprocess}. For each $M, N \in \mathbb{Z}_{> 0}$, let $q^{[-M, N]} (t) = \big( q_{-N(t)}^{[-M, N]} (t), q_{1 - N(t)}^{[-M, N]} (t), \ldots \big)$ be the corresponding $[-M, N]$-bounded offset stochastic six-vertex model. 

Fix a positive integer $k$; integers $i_1, i_2, \ldots , i_k$; positive real numbers $t_1, t_2, \ldots , t_k$; and a finite set $S \subset \mathbb{Z}$. Then,
\begin{flalign}
\label{vertexconvergencemn}
\begin{aligned} 
\displaystyle\lim_{M, N \rightarrow \infty} \mathbb{P} \Big[ q_{i_1}^{[-M, N]} & (\lfloor \varepsilon^{-1} t_1 \rfloor), q_{i_2}^{[-M, N]} (\lfloor \varepsilon^{-1} t_2 \rfloor), \ldots , q_{i_k}^{[-M, N]} (\lfloor \varepsilon^{-1} t_k \rfloor) \in S \Big] \\
& = \mathbb{P} \Big[ q_{i_1} (\lfloor \varepsilon^{-1} t_1 \rfloor), q_{i_2} (\lfloor \varepsilon^{-1} t_2 \rfloor), \ldots , q_{i_k} (\lfloor \varepsilon^{-1} t_k \rfloor) \in S \Big], 
\end{aligned} 
\end{flalign}

\noindent where the convergence is uniform in $\varepsilon > 0$. 

\end{prop}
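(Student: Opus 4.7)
The plan is to couple the unbounded offset stochastic six-vertex model $q(t)$ and its $[-M,N]$-bounded analogue $q^{[-M,N]}(t)$ by sampling the underlying clocks once, using them to build the time graph $\mathcal{D}$ of Section~\ref{OffsetGraph} that drives $q$, and using its truncation $\mathcal{D}^{[-M,N]}$ to drive $q^{[-M,N]}$. In analogy with Harris's argument from Section~\ref{InfiniteExclusion}, I would call $n\in\mathbb{Z}$ a \emph{blocker} (for time horizon $\tau:=\lfloor\varepsilon^{-1}t_{\max}\rfloor$, with $t_{\max}:=\max_j t_j$) if there is no element of $\mathcal{L}^{(n+1)}$ with first coordinate in $[1,\tau]$ and, for every $j\le n$, no element of $\mathcal{R}^{(j)}$ of the form $(t,\ell)$ with $t\in[1,\tau]$ and $\ell>n$. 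Whenever blockers $n_L\in[-M,-M/2]$ and $n_R\in[N/2,N]$ both exist, no arrow of $\mathcal{D}$ crosses either one, so in both coupled processes the trajectories of particles inside $(n_L,n_R]$ are generated by exactly the same arrows (all of which are based at sites in $[-M,N]$ and therefore survive the truncation) and consequently agree; provided $M$ and $N$ are large enough (in terms of $\varphi$, $L$, $R$, $t_{\max}$, and the fixed indices $i_1,\ldots,i_k$) that the particles of interest remain in $(n_L,n_R]$ throughout $[0,\tau]$ with high probability---which follows from a crude Chebyshev-type bound on their displacement using the per-step drift $\varepsilon(R-L)$ and variance $O(\varepsilon)$---this equality yields the identity in \eqref{vertexconvergencemn} on that event.

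Substituting $\delta_1=\varepsilon L$ and $\delta_2=\varepsilon R$, the probability that any fixed $n$ is a blocker satisfies
\[
\mathbb{P}[\,n\text{ is a blocker}\,]\;\ge\;(1-\varepsilon L)^{\tau}\cdot\Bigl(\prod_{k\ge 1}\bigl(1-(\varepsilon R)^k\bigr)\Bigr)^{\tau}\;\ge\;c(L,R,t_{\max})>0,
\]
uniformly in $\varepsilon$ bounded (say $\varepsilon\le\varepsilon_0$ with $\varepsilon_0\max(L,R)<1$), using the elementary inequality $\log\prod_{k\ge 1}(1-\delta_2^k)\ge-\delta_2/(1-\delta_2)^2$ together with $\varepsilon\tau\le t_{\max}$. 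The main obstacle I foresee is that blocker events at distinct sites are not mutually independent, because each depends on the infinite family $\{\mathcal{R}^{(j)}\}_{j\le n}$. I would get around this by introducing a \emph{$K$-local blocker}, in which the $\mathcal{R}^{(j)}$ condition is imposed only for $j\in[n-K,n]$: the corresponding events for sites separated by more than $K$ are jointly independent, each still with probability at least some $c'(L,R,t_{\max})>0$, while a Markov-type estimate
\[
\mathbb{P}\bigl[\,\exists\,j<n-K,\;t\in[1,\tau],\;\ell>n\text{ with }(t,\ell)\in\mathcal{R}^{(j)}\,\bigr]\;\le\;\frac{\tau(\varepsilon R)^{K+2}}{1-\varepsilon R}\;\le\;2Rt_{\max}(\varepsilon R)^{K+1}
\]
(valid for $\varepsilon R\le 1/2$) controls the discrepancy between $K$-local and genuine blockers, uniformly in $\varepsilon\in(0,\varepsilon_0]$, by taking $K$ large.

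Combining these ingredients, the probability of finding genuine blockers in both $[-M,-M/2]$ and $[N/2,N]$ is at least
\[
1-2(1-c')^{\lfloor\min(M,N)/(2K)\rfloor}-C(M+N)(\varepsilon R)^{K+1}
\]
for a constant $C=C(R,t_{\max})$. Letting $K=K(M,N)\to\infty$ slowly (for instance $K\sim\log(M+N)/\log(1/(\varepsilon_0 R))$) makes the long-jump error tend to $0$ while preserving $\min(M,N)/K\to\infty$, so both error terms vanish as $M,N\to\infty$ uniformly in $\varepsilon\in(0,\varepsilon_0]$. Combined with the coupling described in the first paragraph, this yields \eqref{vertexconvergencemn} with the required uniformity.
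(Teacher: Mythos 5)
Your proposal is correct and follows essentially the same route as the paper: couple the two processes through a shared time graph and its truncation, locate sites in $[-M,-M/2]$ and $[N/2,N]$ that no jump crosses before time $\lfloor\varepsilon^{-1}t\rfloor$ (your ``blockers'' are the paper's sites through which no jump is attempted, as in Lemma \ref{interactionmodel}), and handle the dependence caused by the unbounded geometric jumps by truncating the jump range and paying a union-bound price (your $K$-local blockers play the role of the paper's event $\mathcal{A}$ restricting jumps to length at most $\sqrt{M}$). The uniform-in-$\varepsilon$ single-site estimate via $\tau\delta_2\le t_{\max}R$ is likewise the same mechanism the paper uses to obtain its constant $c'<1$.
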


\begin{prop}

\label{boundedvertexmodelprocess} 

Fix positive integers $M$ and $N$ and adopt the notation from Proposition \ref{boundedprocess} and Proposition \ref{boundedvertex}. Then, 
\begin{flalign*}
\displaystyle\lim_{\varepsilon \rightarrow 0} \mathbb{P} \Big[ q_{i_1}^{[-M, N]} & (\lfloor \varepsilon^{-1} t_1 \rfloor), q_{i_2}^{[-M, N]} (\lfloor \varepsilon^{-1} t_2 \rfloor), \ldots , q_{i_k}^{[-M, N]} (\lfloor \varepsilon^{-1} t_k \rfloor) \in S \Big] \nonumber \\ 
& = \mathbb{P} \Big[ X_{i_1}^{[-M, N]} (t_1), X_{i_2}^{[-M, N]} (t_2), \ldots , X_{i_k}^{[-M, N]} (t_k) \in S \Big]. 
\end{flalign*}

\end{prop}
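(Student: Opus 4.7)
The plan is to construct, on a single probability space, a coupling between the $[-M, N]$-bounded ASEP and the $[-M, N]$-bounded offset stochastic six-vertex model (with $\delta_1 = \varepsilon L$, $\delta_2 = \varepsilon R$, and time rescaled by $\varepsilon^{-1}$) so that, with probability tending to $1$ as $\varepsilon \to 0$, their particle trajectories coincide identically up to the fixed time $T = \max_j t_j$. Since the cylinder probabilities in the proposition depend on only finitely many particles at finitely many times, such pathwise agreement will suffice.

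The key observation is that the $[-M, N]$-bounded dynamics involves only clocks attached to the $M + N + 1$ sites $i \in [-M, N]$. Using the classical Bernoulli-to-Poisson approximation, for each such site one can construct, on the same probability space, both the discrete sets $\mathcal{L}^{(i)}, \mathcal{R}^{(i)}$ defining $\mathcal{D}^{[-M, N]}$ and the continuous-time rate-$L$ and rate-$R$ Poisson processes defining $\mathcal{G}^{[-M, N]}$, so that almost surely the times of left attempts and of length-one right attempts in the discrete model, rescaled by $\varepsilon$, converge to the corresponding ASEP clock ring times.

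To conclude pathwise agreement, one must rule out two kinds of exceptional events: length-$\geq 2$ right jumps in the vertex model, and two distinct clocks ringing in the same discrete time step. Both are easy to estimate: the probability of a length-$\geq 2$ right attempt from any given site in a given step is $\delta_2^2 = \varepsilon^2 R^2$, and the probability of two distinct clock rings (across sites or at the same site) in a single step is at most $\bigl(2(M + N + 1)(\delta_1 + \delta_2)\bigr)^2 = O(\varepsilon^2)$. Summed over $\lfloor \varepsilon^{-1} T \rfloor$ discrete steps, both expected counts are $O(\varepsilon)$ and tend to zero. Let $E_\varepsilon$ be the event on which neither kind of exception occurs up to discrete time $\lfloor \varepsilon^{-1} T \rfloor$ and the Bernoulli-to-Poisson coupling is one-to-one on $[0, T]$; then $\mathbb{P}[E_\varepsilon] \to 1$.

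On $E_\varepsilon$, every right jump attempt in the bounded vertex model is nearest-neighbor, and no two attempts occur in the same discrete time step, so the sequential ``left particle jumps first'' rule of the vertex model becomes vacuous. Each discrete jump instruction thus corresponds bijectively to a continuous ASEP jump instruction with matching source, destination, and rescaled time; a straightforward induction on the ordered sequence of instructions then yields $q_i^{[-M, N]}(\lfloor \varepsilon^{-1} t \rfloor) = X_i^{[-M, N]}(t)$ for all $i$ and $t \in [0, T]$ on $E_\varepsilon$. Hence the two probabilities in the statement differ by at most $\mathbb{P}[E_\varepsilon^c] \to 0$. The only nontrivial technical step is the explicit construction of the Bernoulli-to-Poisson coupling; once this is set up, the exceptional-event bounds and the induction are routine.
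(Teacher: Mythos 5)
Your overall strategy is the same as the paper's: couple the two bounded models through their time graphs, use the Bernoulli-to-Poisson limit for the per-site clocks, and discard the $O(\varepsilon)$-probability events that some right jump has length at least $2$ or that two instructions occur in the same discrete time step (the paper packages this slightly differently, via an intermediate process $\widetilde{q}$ with a truncated time graph, but the substance is identical). However, there is one genuine gap: your induction is claimed to start from matching configurations at time $0$, and the two bounded processes do \emph{not} start from the same configuration. In the ASEP with initial data $\varphi$, all particles --- including those at sites $i \le 0$ determined by $\varphi_{1 - i}^{(y)}$ --- are present at time $0$. In the offset stochastic six-vertex model, those ``blue'' particles are absent at time $0$ and enter the system one per discrete time step, the one governed by $\varphi_t^{(y)}$ appearing at site $1 - t$ at time $t$. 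So at discrete time $0$ the vertex model has no particles on $\mathbb{Z}_{\le 0}$, and your pathwise identification $q_i^{[-M, N]}(\lfloor \varepsilon^{-1} t \rfloor) = X_i^{[-M, N]}(t)$ cannot be proved by induction from the initial condition as written.

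The repair is exactly what the paper's two extra exceptional events accomplish, and you should add it. First, with probability at least $1 - M (M + N + 1)(R + L)\varepsilon$ no jump instruction at all occurs during the first $M$ discrete time steps; on this event the blue particles entering at sites $1 - t \in [1 - M, 0]$ settle into place undisturbed, so that by discrete time $M$ the configuration of the bounded vertex model on $[-M, N]$ agrees with the ASEP initial data there, while every blue particle entering after time $M + 1$ appears at a site strictly below $-M$ and hence never receives a jump instruction in the $[-M, N]$-bounded dynamics. Second, for each tagged index $i_r$ you must rule out the event that particle $i_r$ has not yet entered the vertex model by time $\lfloor \varepsilon^{-1} t_r \rfloor$, i.e. that $q_{i_r}(0) \le -\lfloor \varepsilon^{-1} t_r \rfloor$; since $q_{i_r}(0)$ is an almost surely finite random variable independent of $\varepsilon$, this probability also tends to $0$. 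With these two additional events adjoined to your $E_\varepsilon^c$, the rest of your argument goes through and matches the paper's proof.
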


Now Theorem \ref{vertexmodelprocess} follows directly from Proposition \ref{boundedprocess}, Proposition \ref{boundedvertex}, and Proposition \ref{boundedvertexmodelprocess} (using the uniformity of \eqref{vertexconvergencemn} in $\varepsilon$). Thus, it suffices to establish the three statements listed above. We address Proposition \ref{boundedprocess} and Proposition \ref{boundedvertex} in Section \ref{BoundedConverge} and Proposition \ref{boundedvertexmodelprocess} in Section \ref{ModelProcessConverge}.

\section{Proofs of Proposition \ref{boundedprocess} and Proposition \ref{boundedvertex}} 

\label{BoundedConverge}

In this section we establish Proposition \ref{boundedprocess} and Proposition \ref{boundedvertex}. To do this, we will show that there exists a large sub-interval of $[-M, N]$ that does not ``interact'' with its complement, for both the ASEP and the offset stochastic six-vertex model. We first address the ASEP. 

\subsection{Proof of Proposition \ref{boundedprocess}} 

\label{ProofProcess} 

Let us consider the ASEP as in Proposition \ref{boundedprocess}. We say that \emph{no jump is attempted through site $m \in \mathbb{Z}$ before time $T > 0$} if the time graph $\mathcal{G}$ of the ASEP contains no elements of the form $(t; i, j)$ with $t \in [0, T]$ and either $i \le m \le j$ or $j \le m \le i$. 

We have the following result. 

\begin{lem}

\label{interactionprocess}

Fix a positive real number $t$ and positive integers $M$ and $N$. Consider the ASEP as in Theorem \ref{vertexmodelprocess}. There exists a constant $c = c (t, L, R) > 0$ such that, with probability at least $1 - c^{M / 16} - c^{N / 16}$, there exist integers $-m \in [-M, -M / 16]$ and $n \in [N / 16, N]$ so that no jump is attempted through either $-m$ or $n$ before time $t$. 
\end{lem}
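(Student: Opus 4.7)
The proof follows the same flavor as Lemma \ref{noarrows}, the key observation being that, because all jumps in the ASEP are nearest-neighbor, the event that no jump is attempted through a given site $m$ before time $t$ depends on only \emph{four} of the independent clocks making up $\mathcal{G}$.

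The plan is as follows. First I would note that a triple $(s; i, j) \in \mathcal{G}$ with $i \le m \le j$ or $j \le m \le i$ must (since $|i - j| = 1$) be one of $(s; m - 1, m)$, $(s; m, m - 1)$, $(s; m, m + 1)$, or $(s; m + 1, m)$. Hence ``no jump is attempted through $m$ before time $t$'' is precisely the event that none of the clocks $\mathcal{R}^{(m - 1)}$, $\mathcal{L}^{(m)}$, $\mathcal{R}^{(m)}$, $\mathcal{L}^{(m + 1)}$ rings in $[0, t]$. These are independent Poisson processes of rates $R$, $L$, $R$, $L$, so this event has probability exactly $p = e^{-2 t (L + R)} > 0$; call such a site \emph{quiet}.

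Next I would exploit independence. The quietness of sites $m$ and $m'$ depends on disjoint sets of clocks whenever $|m - m'| \ge 3$, so the events ``$m$ is quiet'' for $m$ ranging over any arithmetic progression of common difference $3$ are mutually independent. Restricting such a progression to $[N / 16, N]$ yields at least $\lfloor 15 N / 48 \rfloor$ independent trials, so the probability that \emph{no} site in $[N / 16, N]$ is quiet is bounded above by $(1 - p)^{15 N / 48} \le \widetilde{c}^{N / 16}$ for $\widetilde{c} = (1 - p)^5 \in (0, 1)$. An identical argument (using the complementary set of clocks, so independent of the above) bounds the probability that no site in $[-M, -M / 16]$ is quiet by $\widetilde{c}^{M / 16}$.

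A union bound over the two intervals then gives that, with probability at least $1 - \widetilde{c}^{M / 16} - \widetilde{c}^{N / 16}$, there exist the desired $-m \in [-M, -M / 16]$ and $n \in [N / 16, N]$ through which no jump is attempted before time $t$, proving the lemma with $c = \widetilde{c}$. There is no substantive obstacle here; the only subtlety is picking the separation $3$ so that the four-clock neighborhoods for distinct candidate sites are disjoint, which ensures genuine independence and lets one apply the elementary product estimate above.
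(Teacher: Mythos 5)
Your proof is correct and follows essentially the same route as the paper, which invokes the \emph{inactive}-site notion and the independence-along-arithmetic-progressions argument from the proof of Lemma \ref{noarrows}. Your only (harmless) refinement is to observe that ``no jump through $m$'' depends on just four clocks rather than the six defining inactivity, which sharpens the constant but changes nothing structurally.
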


\begin{proof}

In what follows, we recall the notation and discussion in Section \ref{InfiniteExclusion}. In particular, we recall the notion of \emph{active} and \emph{inactive} sites from directly above Lemma \ref{noarrows}. Furthermore, we recall from the proof of Lemma \ref{noarrows} that there exists some constant $c < 1$ such that the probability that there is no inactive site in the interval $[4^k, 4^{k + 1}]$ is less than $c^{4^k}$, for each $k \in \mathbb{Z}_{> 0 }$. Thus, the probability that there is no inactive site in the interval $[N / 16, N]$ is less than $c^{N / 16}$; similarly, the probability that there is no inactive site in the interval $[-M, - M / 16]$ is less than $c^{M / 16}$. 

Hence, with probability at least $1 - c^{M / 16} - c^{N / 16}$, there exists an inactive site $-m \in [-M, - M / 16]$ and $n \in [N / 16, N]$. Since $-m$ and $n$ are inactive, no jumps can be attempted through $-m$ or $n$ before time $t$; this implies the lemma. 
\end{proof}

\noindent Now we can establish Proposition \ref{boundedprocess}. 

\begin{proof}[Proof of Proposition \ref{boundedprocess}]

Let us couple the time graph $\mathcal{G}$ of the ASEP and time graph $\mathcal{G}^{[-M, N]}$ of the $[-M, N]$-bounded ASEP so that $\mathcal{G}^{[-M, N]}$ is formed from $\mathcal{G}$ by removing all elements of the form $(t; i, j)$ with $i \notin [-M, N]$. 

Now, let $T > \max_{1 \le i \le k} t_i$ be a positive real number. Using Lemma \ref{interactionprocess} we produce, with probability at least $1 - c^{M / 32} - c^{N / 32}$, integers $-m \in [-M / 2, -M / 32]$ and $n \in [N / 32, N / 2]$ such that no jump is attempted through either $- m$ or $n$ before time $T$. 

This implies that, with probability at least $1 - c^{M / 32} - c^{N / 32}$, the restrictions to $[-M / 32, N / 32] \subseteq [-m, n]$ of the $[-M, N]$-bounded ASEP $X^{[-M, N]} (t)$ and the standard ASEP $X(t)$ coincide for all $t \in (0, T]$. Hence, the lemma follows by sending $M$ and $N$ to $\infty$, since $S \cup \{ X_{i_1} (0), X_{i_2} (0), \ldots , X_{i_k} (0) \} \subset [-M / 32, N / 32]$ with probability tending to $1$ as $M$ and $N$ tend to $\infty$. 
\end{proof}

\subsection{Proof of Proposition \ref{boundedvertex}}

\label{ProofVertex}

Let us define our notion of ``interaction'' for the stochastic six-vertex model. Similar to in Section \ref{ProofProcess}, we say that \emph{no jump is attempted through site $m \in \mathbb{Z}$ before time $T > 0$} if the time graph $\mathcal{D}$ of the stochastic six-vertex model contains no elements of the form $(t; i, j)$ with $t \in [1, T]$ and either $i \le m \le j$ or $j \le m \le i$. 

The following result is an analog of Lemma \ref{interactionprocess} for the stochastic six-vertex model. 

\begin{lem}

\label{interactionmodel} 

Fix a positive real number $t$ and positive integers $M$ and $N$. Let $\varepsilon > 0$ be a positive real number, and consider the stochastic six-vertex model from Theorem \ref{vertexmodelprocess}. 

There exists a positive constant $c = c(t, L, R) < 1$, independent of $\varepsilon$, such that the following holds. With probability at least $1 - c^{\min \{ M^{1 / 3}, N^{1 / 3} \} }$, there exist integers $- m \in [-M, -M / 2]$ and $n \in [N / 2, N]$ such no jump is attempted through either $- m$ or $n$ before time $\lfloor \varepsilon^{-1} t \rfloor$. 
\end{lem}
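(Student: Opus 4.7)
We adapt the strategy of Lemma~\ref{interactionprocess}, but must confront a new complication: right-jump instructions in $\mathcal{D}$ can be arbitrarily long, creating spatial dependencies absent from the ASEP. We resolve this by a truncation argument. By symmetry and the independence of the two sides of the origin, it suffices to produce $n \in [N/2, N]$ with probability at least $1 - c^{N^{1/3}}$; the analogous statement for $-m \in [-M, -M/2]$ then follows by the same reasoning. A direct computation using the independence of jump instructions and the geometric form of right-jump lengths yields, for any $m \in \mathbb{Z}$ and any $t' \in \mathbb{Z}_{> 0}$,
\[
\mathbb{P}\big[\text{no element of } \mathcal{D} \text{ at time } t' \text{ crosses } m\big] = (1-\delta_1)^2 (1-\delta_2)^2 \prod_{r \geq 2}(1-\delta_2^r).
\]
Restricting to $\varepsilon \leq \varepsilon_0 = \varepsilon_0(L, R)$ so that $\delta_1, \delta_2 \leq 1/2$, the right side is at least $1 - C_1 \varepsilon$ for some $C_1 = C_1(L, R)$. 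Raising to the power $T = \lfloor \varepsilon^{-1} t \rfloor$ and using $\delta_1 = \varepsilon L$ and $\delta_2 = \varepsilon R$, we conclude that any fixed site is \emph{clean} (uncrossed throughout $[1, T]$) with probability at least $p_0 = p_0(t, L, R) > 0$, uniformly in $\varepsilon \in (0, \varepsilon_0]$.

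To extract a clean site from $[N/2, N]$, set $K = \lceil N^{2/3} \rceil$ and place candidate sites $m_1 < m_2 < \cdots < m_s$ in $[N/2, N]$ with spacing $2K$, so that $s \geq c_0 N^{1/3}$ for some absolute $c_0 > 0$. Let $\mathcal{C}_a$ denote the event that no element $(t'; i, j) \in \mathcal{D}$ with $|j - i| \leq K$ and $t' \in [1, T]$ crosses $m_a$. Then $\mathcal{C}_a$ depends only on the Bernoulli variables attached to sites in $[m_a - K, m_a + 1]$; the spacing $2K$ makes these sets disjoint across $a$, so $\mathcal{C}_1, \ldots, \mathcal{C}_s$ are mutually independent. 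Since $\mathcal{C}_a$ contains the event that no jump of any length crosses $m_a$, we have $\mathbb{P}[\mathcal{C}_a] \geq p_0$, and hence
\[
\mathbb{P}\big[\mathcal{C}_1^c \cap \cdots \cap \mathcal{C}_s^c\big] \leq (1-p_0)^s \leq c_1^{N^{1/3}}
\]
for some $c_1 = c_1(t, L, R) < 1$.

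It remains to upgrade ``locally clean'' to ``clean'' by ruling out long right jumps (those of length exceeding $K$). A union bound over start sites $i \leq N$ and times $t' \in [1, T]$, using the per-site tail $\delta_2^{K+1}$ and the geometric sum $\sum_{i \leq N/2 - K} \delta_2^{N/2 - i} \leq \delta_2^K/(1-\delta_2)$ for jumps originating well to the left, shows that the probability that some length-$(>K)$ right jump in $\mathcal{D}$ crosses a point of $[N/2, N]$ during $[1, T]$ is at most $C_2 T(N+K)\delta_2^K/(1-\delta_2)$. Since $T\delta_2 \leq tR$ is uniformly bounded in $\varepsilon$, this quantity is super-exponentially small in $K = N^{2/3}$, hence dominated by $c_1^{N^{1/3}}$ for large $N$. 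Combining the two estimates (and choosing $c$ close enough to $1$ to handle small $N$ trivially) yields the lemma. The chief technical obstacle is maintaining $c$ uniformly in $\varepsilon$, which forces the restriction $\varepsilon \leq \varepsilon_0$; this is harmless for the application to Proposition~\ref{boundedvertex}, whose uniformity claim is relevant only in the regime $\varepsilon \to 0$.
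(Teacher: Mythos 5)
Your proof is correct and follows essentially the same strategy as the paper's: truncate the right-jump instructions at a polynomial length scale to recover independence between well-separated candidate sites, bound the per-site ``clean'' probability uniformly in $\varepsilon$ using $T\delta_2 \le tR$ and $T\delta_1 \le tL$, and control the discarded long jumps by a geometric-tail union bound (the paper truncates at $\sqrt{M}$ rather than $N^{2/3}$, which is immaterial). Your explicit restriction to $\varepsilon \le \varepsilon_0$ to keep the constants uniform is a point the paper glosses over, and is harmless for the intended application.
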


\begin{proof}
In what follows, we will assume that $M$ and $N$ are sufficiently large (for instance, greater than $10^9$), so that all inequalities below hold; this assumption amounts to enlarging the constant $c$ in the lemma. We denote $T = \lfloor \varepsilon^{-1} t \rfloor$. 

Observe that the statement of this lemma is similar to that of Lemma \ref{interactionprocess}. However, that lemma was quicker to establish since jumps in the ASEP are nearest-neighbor, a feature that considerably limits the number of ways in which a particle can jump through a given site. In contrast, jumps can be of arbitrarily large length in the stochastic six-vertex model, which permits particles far from the interval $[-M, -M / 2]$ to jump inside the interval $[-M, -M / 2]$; it also removes some of the independence that allowed for the proof of Proposition \ref{boundedprocess}. 

Thus, we will first bound the probability that particles to the left of $M$ attempt to jump into a smaller sub-interval of $[-M, -M / 2]$, and then we will bound the probability that particles in the interval $[-M, -M / 2]$ attempt to make very long jumps. Restricting to these events, we will be able to proceed as in the proof of Proposition \ref{boundedprocess} to conclude. 

To that end, fix $t', k \in \mathbb{Z}_{> 0}$ and $i \in \mathbb{Z}$, and observe that the probability that $\mathcal{D}$ contains a triplet of the form $(t'; i, j)$, with $j \ge i + k$, is bounded by $(1 - \delta_2) \sum_{j = k}^{\infty} \delta_2^j = \delta_2^k$. Therefore, the probability that $\mathcal{D}$ contains a triplet of the form $(t'; i, j)$, with $t' \in [1, T]$, $i \le -M$, and $j \ge -4 M / 5$, is bounded by $T \sum_{k = \lceil M / 5 \rceil}^{\infty} \delta_2^k \le T \delta_2^{M / 5} / (1 - \delta_2)$. Denote the event on which this occurs by $\mathcal{A}_1$. 

By similar reasoning, if we fix $t' \in \mathbb{Z}_{> 0}$ and $i \in \mathbb{Z}$, then the probability that $\mathcal{D}$ contains an element of the form $(t'; i, j)$ with $|i - j| > \sqrt{M}$, is bounded by $\delta_2^{\sqrt{M}}$. Thus the probability that $\mathcal{D}$ contains an element of the form $(t'; i, j)$, with $t' \in [0, T]$, $i \in [-M, 0]$, and $|i - j| \ge \sqrt{M}$, is most $(M + 1) T \delta_2^{\sqrt{M}} < 2 M T \delta_2^{\sqrt{M}} / (1 - \delta_2)$. Denote the event on which this occurs by $\mathcal{A}_2$. 

Denoting $\mathcal{A} = \mathcal{A}_1 \cup \mathcal{A}_2$, we find that $\mathbb{P} [\mathcal{A}] \le (1 - \delta_2)^{-1} \big( 2 M T \delta_2^{\sqrt{M}} + \delta_2^{M / 5} \big) < C' M \delta_2^{M^{1 / 3}}$, for some constant $C'$ independent of $\varepsilon$ (here, we used the facts that that $T < \varepsilon^{-1} t$ and $\delta_2 = \varepsilon R$ to deduce the independence from $\varepsilon$). Let us condition on the complement of the event $\mathcal{A}$. Thus $\mathcal{D}$ does not contain a triplet of the form $(t'; i, j)$ satisfying $t' \in [1, T]$ and $i \le -M < - 4 M / 5 \le j$, or satisfying $t' \in [1, T]$, $i \in [-M, 0]$, and $j \ge i + \sqrt{M}$. 

Now, fix an integer $i \in [-4M / 5, - M / 2)$. Fixing $t' \in \mathbb{Z}_{> 0}$, the probability that $\mathcal{D}$ contains a triplet of the form $(t'; i + 1, i)$ is equal to $\delta_1$. Similarly, the probability that $\mathcal{D}$ contains a triplet of the form $(t'; i_1, i_2)$ with $i_1 < i \le i_2$ is equal to $(1 - \delta_2) \sum_{k = 0}^{\infty} \sum_{j = 1}^{\infty} \delta_2^{j + k} < \delta_2 / (1 - \delta_2)$. By analogous reasoning, the probability that $\mathcal{D}$ contains a triplet of the form $(t'; i, i')$ (for some $i' \in \mathbb{Z}$) is bounded by $\delta_1 + \delta_2$. 

Therefore, the probability that a jump is attempted through a fixed site $i$ at some fixed time is at most equal to $2 \delta_1 + 2 \delta_2 / (1 - \delta_2)$. It follows that the probability that a jump is attempted through a fixed site $i$ sometime in the time interval $[1, T]$ is at most equal to $1 - \big( 1 - 2 \delta_1 - 2 \delta_2 / (1 - \delta_2)\big)^T < c'$, for some constant $c' < 1$ independent of $\varepsilon$, $M$, and $N$; to deduce the independence on $\varepsilon$, we used the facts that $\delta_1 = \varepsilon L$, $\delta_2 = \varepsilon R$, and $T = \lfloor \varepsilon^{-1} t \rfloor$. 

Now, recall that we are conditioning on the event $\mathcal{A}$, on which $\mathcal{D}$ does not contain any triplet of the form $(t'; i, j)$ with $t' \in [1, T]$ and $|i - j| > \sqrt{M}$. Thus, the events that a jump is attempted through site $i + k \lfloor 3 \sqrt{M} \rfloor$ are mutually independent as $k$ ranges across $\mathbb{Z}$. From this, we deduce that the probability that at least one jump is attempted through each site in the interval $[-4M / 5, - 3 M / 5]$ is at most equal to $c'^{M^{1 / 2} / 15} < c'^{M^{1 / 3}}$. 

Hence, the probability that at least one jump is attempted through each site in $[-M, -M / 2]$ before time $T$ is at most equal to $c'^{M^{1 / 3}}$, after conditioning on $\mathcal{A}$. Since $\mathbb{P} \big[ \mathcal{A} \big] \le C' M \delta_2^{M^{1 / 3}}$, we deduce that the probability $p_M$ that at least one jump is attempted through each site in $[-M, -M / 2]$ before time $T$ is at most $C' M \delta_2^{M^{1 / 3}} + c'^{M^{1 / 3}}$. Similarly, the probability $p_N$ that at least one jump is attempted through each site in $[N / 2, N]$ (before time $T$) is at most equal to $C'' N \delta_2^{N^{1 / 3}} + c''^{N^{1 / 3}}$, for some constants $C''$ and $c''$. 

Now, the lemma follows from summing $p_M$ and $p_N$ and taking the complement. 
\end{proof}

\noindent Now we can establish Proposition \ref{boundedvertex}. 

\begin{proof}[Proof of Proposition \ref{boundedvertex}]

As in the proof of Proposition \ref{boundedprocess}, we couple the time graph $\mathcal{D}$ of the offset stochastic six-vertex model and the time graph $\mathcal{D}^{[-M, N]}$ of the $[-M, N]$-bounded offset stochastic six-vertex model so that $\mathcal{D}^{[-M, N]}$ is formed from $\mathcal{D}$ by removing all elements of the form $(t; i, j)$ with $i \notin [-M, N]$. 

Now let $t > \max_{1 \le i \le k} t_i$ be a positive real number, and denote $T = \lfloor \varepsilon^{-1} t \rfloor$. By Lemma \ref{interactionprocess}, with probability tending to $1$ (independently of $\varepsilon$) as $M$ and $N$ tend to $\infty$, there exist integers $-m \in [-M, M / 2]$ and $n \in [N / 2, N]$ such that no jump is attempted through either $-m$ or $n$ before time $T$. Therefore, with probability tending to $1$ as $M$ and $N$ tend to $\infty$, the $[-M, N]$-bounded offset stochastic six-vertex model and the standard offset stochastic six-vertex model coincide up to time $T$ in the interval $[-M / 2, N / 2] \subseteq [-m, n]$. This implies the lemma, since the probability that $S \cup \{ q_{i_1} (0), q_{i_2} (0), \ldots , q_{i_k} (0) \} \subset [-M / 2, N / 2]$ tends to $1$ as $M$ and $N$ tend to $\infty$. 
\end{proof}

\section{Proof of Proposition \ref{boundedvertexmodelprocess}}

\label{ModelProcessConverge} 

In this section we establish Proposition \ref{boundedvertexmodelprocess} by first modifying the offset $[-M, N]$-bounded stochastic six-vertex model $q^{[-M, N]} (t)$ to form a process $\widetilde{q} (t) = \widetilde{q}^{[-M, N]} (t)$ that looks more similar to the $[-M, N]$-bounded ASEP. Then we will show that $q^{[-M, N]} (t)$ converges to $\widetilde{q}^{[-M, N]}$ and that $\widetilde{q}^{[-M, N]}$ converges to $X^{[-M, N]} (t)$, both as $\varepsilon$ tends to $0$.

\subsection{The Process \texorpdfstring{$\widetilde{q}$}{}}

\label{ProcessTilde}

Fix positive integers $M$ and $N$. Recall the definition of the discrete time graph $\mathcal{D}$ from Section \ref{OffsetGraph} and the $[-M, N]$-bounded discrete time graph $\mathcal{D}^{[-M, N]}$ from Section \ref{BoundedModelProcess}. We will define an interacting particle system $\widetilde{q} (t) = \widetilde{q}^{[-M, N]} (t)$ from $\mathcal{D}^{[-M, N]}$, as follows. We first form the \emph{altered time graph} $\widetilde{\mathcal{D}} = {\mathcal{D}}^{[-M, N]}$ from $\mathcal{D}^{[-M, N]}$ by replacing any triple $(t; i, j) \in \mathcal{D}$ with $(t; i, i + 1)$ if $j > i + 1$.

The model $\widetilde{q} (t)$ initialized slightly differently from the original offset stochastic six-vertex model $q (t)$. Namely, we place a particle at site $i \in \mathbb{Z}_{\ge 1}$ at time $t = 0$ if and only if $\varphi_i^{(x)} = 1$, and we place a particle at site $i \in \mathbb{Z}_{\le 0}$ at time $0$ if and only if $\varphi_{1 - i}^{(y)} = 1$; this matches the initialization for the ASEP given in Definition \ref{initialprocess}. Then, we order the positions of the particles $\cdots \widetilde{q}_{-1} (0) < \widetilde{q}_0 (0) < \widetilde{q}_1 (0) < \cdots$.

Now, let us describe the dynamics for $\widetilde{q} (t)$. We will have that $\widetilde{q} (0) = \widetilde{q} (1) = \cdots = \widetilde{q} (M)$, that is, the particle system $\widetilde{q}$ will not move for the first $M$ time steps. 

Suppose that $t > M$. The following describes how to update $\widetilde{q} (t - 1)$ to $\widetilde{q} (t)$, given $\widetilde{\mathcal{D}}$. 

\begin{enumerate}

\item{ \label{2tilde} Suppose that $(t; i, j) \in \widetilde{\mathcal{D}}$ for some $i, j \in \mathbb{Z}$. 

\begin{enumerate}

\item{ \label{2atilde} Suppose that $(t; i', j') \notin \widetilde{\mathcal{D}}$ for any $(i', j') \ne (i, j)$ such that $\{ i, j \} \cap \{ i', j' \}$ is non-empty. Then, the particle at site $i$ (at time $t - 1$) attempts to jump to site $j$. If the destination (site $j$) was unoccupied at time $t - 1$, then the jump is performed; otherwise, it is not and the particle does not move. }

\item{ \label{2btilde} Otherwise, if there exists such a $(t; i', j') \in \widetilde{\mathcal{D}}$, the particle at site $i$ (at time $t - 1$) does not move. } 

\end{enumerate} }

\item{ \label{3tilde} If there does not exist any $(t; i, j) \in \widetilde{\mathcal{D}}$, then set $\widetilde{q} (t) = \widetilde{q} (t - 1)$.}
\end{enumerate} 

Now, adopting the notation of Proposition \ref{boundedvertexmodelprocess}, consider the offset stochastic six-vertex model $\widetilde{q} (t)$ with parameters $\delta_1 = \varepsilon L$ and $\delta_2 = \varepsilon R$. Let us show that how $\widetilde{q} (t)$ converges to the tagged particle process for the bounded ASEP $X^{[-M, N]} (t)$. 

To that end, let $t > \max_{1 \le i \le k} t_i$ be a positive real number and denote $T = \lfloor \varepsilon^{-1} t \rfloor$. Define the \emph{rescaled time graph }$\mathcal{R}_{\varepsilon} \subset \mathbb{R} \times \mathbb{Z} \times \mathbb{Z}$ to be the set consisting of all triplets of the form $(\varepsilon t'; i, j)$, with $(t'; i, j) \in \widetilde{\mathcal{D}}$. 

Now, observe that the restriction $\mathcal{R}_{\varepsilon} \cap S_{M, N}$ of the rescaled time graph $\mathcal{R}_{\varepsilon}$ to the compact subset $S_{M, N} = [0, t] \times [-M, N] \times [-1 - M, N + 1] \subset \mathbb{R} \times \mathbb{Z} \times \mathbb{Z}$ converges to the restriction $\mathcal{G}^{[-M, N]} \cap S_{M, N}$ of the bounded time graph $\mathcal{G}^{[-M, N]}$ of the ASEP to $S_{M, N}$, as point processes on $S_{M, N}$. Indeed, this follows from the convergence of the Bernoulli trials process, with probability $p = \varepsilon s$ and number of trials $n = \lfloor \varepsilon^{-1} t \rfloor$, to the Poisson process with parameter $s$, run for time $t'$. 

Thus, since $\widetilde{q} (0) = X^{[-M, N]} (0)$ and since $\widetilde{q}$ and $X^{[-M, N]}$ evolve in the same way after their time graphs are given, it follows that
\begin{flalign}
\label{convergetildeq}
\begin{aligned}
\displaystyle\lim_{\varepsilon \rightarrow 0} \mathbb{P} \Big[ \widetilde{q_{i_1}} & (\lfloor \varepsilon^{-1} t_1 \rfloor), \widetilde{q_{i_2}} (\lfloor \varepsilon^{-1} t_2 \rfloor), \ldots , \widetilde{q_{i_k}} (\lfloor \varepsilon^{-1} t_k \rfloor) \in S \Big]  \\ 
& = \mathbb{P} \Big[ X_{i_1}^{[-M, N]} (t_1), X_{i_2}^{[-M, N]} (t_2), \ldots , X_{i_k}^{[-M, N]} (t_k) \in S \Big]. 
\end{aligned}
\end{flalign}

\subsection{Coupling \texorpdfstring{$q^{[-M, N]}$}{} and \texorpdfstring{$\widetilde{q}$}{}}	

\label{qtildeq}

Given \eqref{convergetildeq}, it suffices to identify the processes $\widetilde{q}$ and $q$ as $\varepsilon$ tends to $0$. To that end, we will show that if the processes $\widetilde{q}$ and $q^{[-M, N]}$ are coupled under the same time graph, then they look the same with high probability.

To explain further, we adopt the notation of Proposition \ref{boundedvertexmodelprocess}. Now, it is quickly verified that if there exists an integer $r \in [1, k]$ such that $\widetilde{q}_{i_r} (\lfloor \varepsilon^{-1} t_r \rfloor ) \ne q_{i_r}^{[-M, N]} (\lfloor \varepsilon^{-1} t_r \rfloor )$, then at least one of four possible events must occur. 

\begin{itemize}

\item{We have that $q_{i_r} (0) \le - \lfloor \varepsilon^{-1} t_r \rfloor $. }

\item{There exists some $t \in [1, M]$ and some $i \in [-M, N]$ such that $(t; i, j) \in \mathcal{D}$. }

\item{We have that $\widetilde{\mathcal{D}} \ne \mathcal{D}^{[-M, N]}$ when restricted to $[0, T] \times \mathbb{Z} \times \mathbb{Z}$. Equivalently, there is some $t \in [1, M]$, some $i \in [-M, N]$, and some $j \in \mathbb{Z}$ with $j \ge i + 2$, such that $(t; i, j) \in \mathcal{D}$. }

\item{There exist a time $t \in [1, T]$ and two distinct pairs $(i, j), (i', j') \in [-M, N] \times \mathbb{Z}$ such that $(t; i, j), (t; i', j') \in \mathcal{D}$.}

\end{itemize}

Since the $q_{i_r} (0)$ are random variables, the first event happens with probability tending to $0$ as $\varepsilon$ tends to $0$. 

To estimate probability of the second event, suppose that $t > 0$ and $i \in [-M, N]$ are fixed integers. Then, the probability that there exists a $j \in \mathbb{Z}$ such that $(t; i, j) \in \mathcal{D}$ is $\delta_1 + \delta_2$. Ranging $t \in [1, M]$ and $i \in [-M, N]$, we deduce that the probability of the second event is $M (M + N + 1) (\delta_1 + \delta_2) = M (M + N + 1) (R + L) \varepsilon$. 

To estimate the probability of the third event, again suppose that $t > 0$ and $i \in [-M, N]$ are fixed integers. Then, the probability that there exists a $j \in \mathbb{Z}$ such that $j \ge i + 2$ and $(t; i, j) \in \mathcal{D}$ is $\delta_2^2$. Ranging $t \in [1, \lfloor \varepsilon^{-1} \max_{1 \le r \le k} t_r \rfloor ]$ and $i \in [-M, N]$, we deduce that the probability of the second event is $(M + N + 1) \delta_2^2 \lfloor \varepsilon^{-1} \max_{1 \le s \le k} t_r \rfloor \le (M + N + 1) R^2 \varepsilon \max_{1 \le s \le k} t_s$.  

The probability of the fourth event can be estimated similarly to the probability of the third; it is bounded by $(M + N + 1)^2 (R + L)^2 \varepsilon \max_{1 \le s \le k} t_s$. 

Hence, the probability that at least one of the four events listed above occurs tends to $0$ as $\varepsilon$ tends to $0$. Thus,  
\begin{flalign}
\label{qconvergetildeq}
\begin{aligned}
\displaystyle\lim_{\varepsilon \rightarrow 0} \bigg( \mathbb{P} \Big[ \widetilde{q_{i_1}} & (\lfloor \varepsilon^{-1} t_1 \rfloor), \widetilde{q_{i_2}} (\lfloor \varepsilon^{-1} t_2 \rfloor), \ldots , \widetilde{q_{i_k}} (\lfloor \varepsilon^{-1} t_k \rfloor) \in S \Big]  \\ 
& - \displaystyle\lim_{\varepsilon \rightarrow 0} \mathbb{P} \Big[ q_{i_1}^{[-M, N]} (\lfloor \varepsilon^{-1} t_1 \rfloor), q_{i_2}^{[-M, N]}  (\lfloor \varepsilon^{-1} t_2 \rfloor), \ldots , q_{i_k}^{[-M, N]}  (\lfloor \varepsilon^{-1} t_k \rfloor) \in S \Big] \bigg) = 0. 
\end{aligned}
\end{flalign}

\noindent Now Proposition \ref{boundedvertexmodelprocess} follows from \eqref{convergetildeq} and \eqref{qconvergetildeq}.

\end{document}